%%%%%%%%%%%%%%%%%%%%%%% file template.tex %%%%%%%%%%%%%%%%%%%%%%%%%
%
% This is a general template file for the LaTeX package SVJour3
% for Springer journals.          Springer Heidelberg 2010/09/16
%
% Copy it to a new file with a new name and use it as the basis
% for your article. Delete % signs as needed.
%
% This template includes a few options for different layouts and
% content for various journals. Please consult a previous issue of
% your journal as needed.
%
%%%%%%%%%%%%%%%%%%%%%%%%%%%%%%%%%%%%%%%%%%%%%%%%%%%%%%%%%%%%%%%%%%%
%
% First comes an example EPS file -- just ignore it and
% proceed on the \documentclass line
% your LaTeX will extract the file if required
% [arxiv_v2: filecontents example.eps stripped, 188 chars]
\RequirePackage{fix-cm}
\documentclass[smallextended]{svjour3}       % onecolumn (second format)
\smartqed  % flush right qed marks, e.g. at end of proof
\usepackage{graphicx}

%% my stuff

\usepackage{euscript,amsmath,amssymb,amsfonts,graphicx,bm,amsthm}
\usepackage{latexsym,amssymb,enumerate,amsmath,amsthm} 
\usepackage{soul,xcolor}

\newcommand{\dd}{\text{d}}
\def\eps{\varepsilon}
\def\E{\mathbb{E}}
\def\P{\mathbb{P}}
\def\R{\mathbb{R}}
\def\tod{\to_{\textup{d}}}
\def\T{\mathcal{T}}

\def\black{\color{black}}

\theoremstyle{plain}

\usepackage{natbib}
\setcitestyle{authoryear}%,open={((},close={))}}
%\citet{}	Textual citation
%\citep{}	Parenthetical citation
%\citet*{}	Same as \citet but if there are several authors, all names are printed
%\citep*{}	The same as \citep but if there are several authors, all names %are printed
%\citeauthor{}	Prints only the name of the authors(s)
%\citeyear{}	Prints only the year of the publication.

%%

%
% \usepackage{mathptmx}      % use Times fonts if available on your TeX system
%
% insert here the call for the packages your document requires
%\usepackage{latexsym}
% etc.
%
% please place your own definitions here and don't use \def but
% \newcommand{}{}
%
% Insert the name of "your journal" with
% \journalname{myjournal}
%
\begin{document}

\title{Distribution of extreme first passage times of diffusion\thanks{The author was supported by the National Science Foundation (Grant Nos.\ DMS-1944574, DMS-1814832, and DMS-1148230).}%\thanks{Grants or other notes
%about the article that should go on the front page should be
%placed here. General acknowledgments should be placed at the end of the article.}
}
%\subtitle{Do you have a subtitle?\\ If so, write it here}

%\titlerunning{Short form of title}        % if too long for running head

\author{Sean D. Lawley
}

%\authorrunning{Short form of author list} % if too long for running head

\institute{Sean D. Lawley \at
              University of Utah, Salt Lake City, UT 84112 USA \\
%              Tel.: +123-45-678910\\
%              Fax: +123-45-678910\\
              \email{lawley@math.utah.edu}           %  \\
%             \emph{Present address:} of F. Author  %  if needed
}

\date{Received: date / Accepted: date}
% The correct dates will be entered by the editor

\maketitle

\begin{abstract}
Many events in biology are triggered when a diffusing searcher finds a target, which is called a first passage time (FPT). The overwhelming majority of FPT studies have analyzed the time it takes a single searcher to find a target. However, the more relevant timescale in many biological systems is the time it takes the fastest searcher(s) out of many searchers to find a target, which is called an extreme FPT. In this paper, we apply extreme value theory to find a tractable approximation for the full probability distribution of extreme FPTs of diffusion. This approximation can be easily applied in many diverse scenarios, as it depends on only a few properties of the short time behavior of the survival probability of a single FPT. We find this distribution by proving that a careful rescaling of extreme FPTs converges in distribution as the number of searchers grows. This limiting distribution is a type of Gumbel distribution and involves the LambertW function. This analysis yields new explicit formulas for approximations of statistics of extreme FPTs (mean, variance, moments, etc.)\ which are highly accurate and are accompanied by rigorous error estimates.
\keywords{first passage time \and diffusion \and extreme value theory}
% \PACS{PACS code1 \and PACS code2 \and more}
\subclass{
60G70 %Extreme value theory; extremal processes
\and 92B05 % General biology and biomathematics
\and 	92C05 % biophysics
}
\end{abstract}

\newpage
\section{Introduction} 

Events in biological systems are often triggered when a diffusing searcher finds a target \citep{chou_first_2014,holcman_time_2014,bressloff_stochastic_2013}. Examples range from the initiation of the immune response when a searching T cell finds a cognate antigen \citep{delgado2015}, to the triggering of calcium release by diffusing IP$_{3}$ molecules that reach IP$_{3}$  receptors \citep{wang1995}, to gene activation by the arrival of a diffusing transcription factor to a certain gene \citep{larson2011}, to animals foraging for food \citep{mckenzie2009,kurella2015}. In such systems, the activation timescale is determined by the \emph{first passage time} (FPT) of a searcher to a target.

The vast majority of FPT studies have focused on the time it takes a given single searcher to find a target. However, several recent works and commentaries have shown that the relevant timescale in many biological systems is actually the time it takes the fastest searcher(s) to find a target out of a large group of searchers \citep{schuss2019,basnayake2019,coombs2019,redner2019,sokolov2019,rusakov2019,martyushev2019,tamm2019,basnayake_extreme_2018,guerrier2018}. For example, approximately $N=10^{8}$ sperm cells search for an egg in human reproduction, but fertilization occurs as soon as a single sperm cell finds the egg \citep{meerson2015,reynaud2015,barlow2016,yang2016}.

Importantly, the time it takes the fastest searcher(s) out of many searchers to find a target is typically much less than the time it takes a given single searcher to find a target. In fact, \cite{schuss2019} postulated that this is a general mechanism that operates across many biological systems and called it the \emph{redundancy principle}. In particular, these authors claimed that many seemingly redundant copies of a searcher (molecule, protein, cell, animal, etc.)\ are not superfluous, but rather have the specific functions of accelerating activation rates. That is, the apparently ``extra'' copies are in fact necessary for biological function.

Indeed, the review by \cite{schuss2019} highlights many examples of signal transduction triggered by the fastest molecules to find a target. We now explain one example recently studied by \cite{basnayake2019fast} that involves calcium-induced calcium release in dendritic spines. While the geometry can vary greatly, a dendritic spine consists roughly of a bulbous head connected to a thin neck. It has been observed that calcium ions entering at the head of the spine can diffuse to and then bind small Ryanodyne receptors at the base of the spine neck which then induces an avalanche of calcium release from internal spine apparatus stores. This calcium avalanche at the base occurs only a few milliseconds after calcium ions enter at the head, which is perplexing because the time it takes a given single calcium ion to diffuse from the head to a Ryanodyne receptor at the base is approximately $\tau=120$ milliseconds. However, through a close integration of experiments and numerical simulations, \cite{basnayake2019fast} explained this phenomenon by showing that approximately $N=10^{3}$ ions enter at the head and that the fastest ions out of this group take only a few milliseconds to reach the receptors at the base. Similar processes occur in the photoresponse of a fly to the absorption of a single photon \citep{katz2017,schuss2019}.

Another similar example concerns the random production of antibodies by genetic recombination inside a B cell during somatic hypermutation \citep{schuss2019,coombs2019}. In this scenario, while several gene segment copies are produced, only the first segment to find and bind a certain macromolecular complex will be used for producing antibodies. Additional examples include the IP3 pathway, in which the first IP3 molecules which find small IP3 receptors induce calcium release, and synaptic transmission, in which the pre-synaptic signal is transmitted by the first of many neurotransmitters which diffuse to and bind small post-synaptic receptors \citep{schuss2019}.

To investigate how the number of searchers affects the time it takes the fastest searcher(s) to find a target, consider $N\gg1$ independent and identical diffusive searchers. Let $\tau_{1},\dots,\tau_{N}$ be their independent and identically distributed (iid) FPTs to reach some target. The first time one of these searchers finds the target is
\begin{align}\label{ffpt}
T_{N}
:=\min\{\tau_{1},\dots,\tau_{N}\}.
\end{align}
More generally, the $k$th fastest searcher finds the target at time
\begin{align}\label{tkn}
T_{k,N}
:=\min\big\{\{\tau_{1},\dots,\tau_{N}\}\backslash\cup_{j=1}^{k-1}\{T_{j,N}\}\big\},\quad k\in\{1,\dots,N\},
\end{align}
where $T_{1,N}:=T_{N}$.

{\black The mean of a single FPT, $\E[\tau_{1}]$, is well understood in a variety of scenarios \citep{benichou2010,cheviakov2010,PB3,lindsay2017,lawley2019dtmfpt}, and important progress has been made recently in understanding the distribution of a single FPT \citep{rupprecht2015,grebenkov2018strong,grebenkov2018,grebenkov2019full}. However, studying the } so-called \emph{extreme} FPTs, $T_{k,N}$, is notoriously difficult, both analytically and numerically \citep{weiss1983,yuste1996,basnayake2019,schuss2019,lawley2020esp,lawley2020uni}. An essential difficulty is that extreme FPTs depend on very rare events. Indeed, while a typical searcher tends to wander around before finding the target, the fastest searchers move almost deterministically along the shortest geodesic path to the target  {\black \citep{godec2016, godec2016x, grebenkov2017mortal, basnayake_extreme_2018, lawley2020uni}}. This phenomenon is illustrated in Figure~\ref{figschem}.

%%%%%%%%%%%%%%%%%%%%%%%%%%%%%%%%%%%
\begin{figure}%[t]%[htp]
\centering
\includegraphics[width=.4\linewidth]{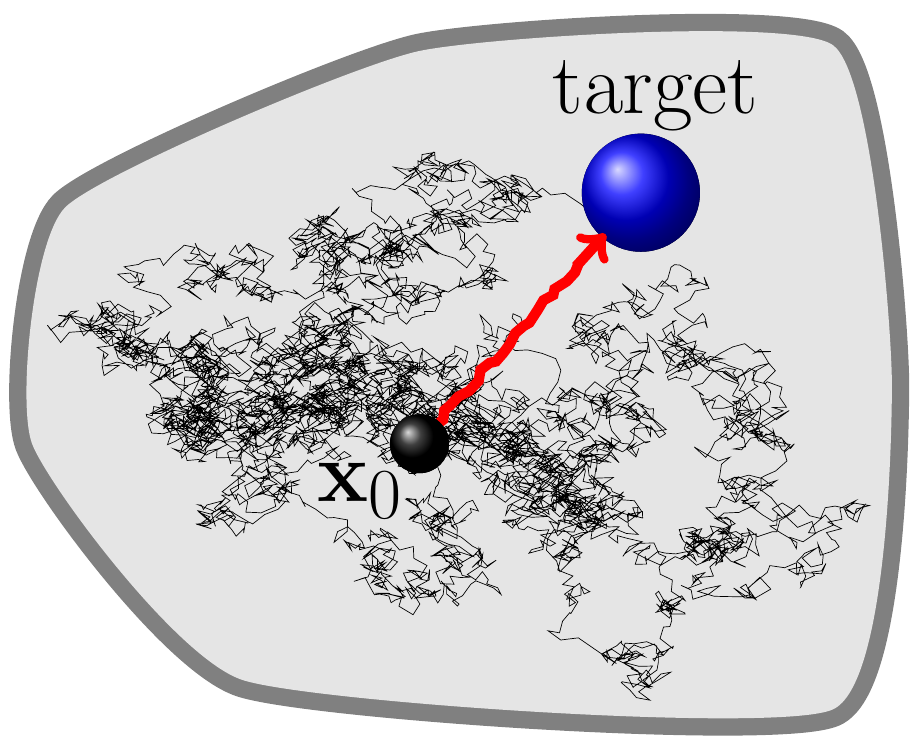}
\caption{The fastest diffusive searcher out of $N\gg1$ searchers moves almost deterministically (red trajectory) along the shortest path from the starting location $\mathbf{x}_{0}$ to the target (blue ball), while a typical searcher wanders around (black trajectory) before finding the target.}
\label{figschem}
\end{figure}
%%%%%%%%%%%%%%%%%%%%%%%%%%%%%%%%%%%

Another significant challenge in understanding extreme FPTs in biological applications is that the targets are often very small \citep{holcman2014}. For example, this is the case in the application to calcium-induced calcium release in dendritic spines discussed above, as a Ryanodyne receptor can be modeled as a disk of radius $r=0.01\,\mu\text{m}$ whereas the distance from the spine head to the base of the spine neck is roughly $L=3\,\mu\text{m}$ \citep{basnayake2019fast}, and thus a dimensionless measure of the target size is
\begin{align*}
\eps
:=r/L
\approx0.003\ll1.
\end{align*}
This is a challenge because the mean fastest FPT, $\E[T_{N}]$, diverges for small targets ($\eps\ll1$) but vanishes for many searchers ($N\gg1$). That is, if we fix the number of searchers $N$ and take the target size $\eps$ sufficiently small, then
\begin{align}\label{smalltargets}
\tfrac{D}{L^{2}}\E[T_{N}]\gg1,%\quad\text{for }\eps\ll1\text{ sufficiently small},
\end{align}
where $\tfrac{L^{2}}{D}$ is the diffusion timescale. On the other hand, if we fix the target size $\eps$ and take the number of searchers $N$ sufficiently large, then
\begin{align}\label{largeN}
\tfrac{D}{L^{2}}\E[T_{N}]\ll1.%\quad\text{for a sufficiently large number of searchers, }N\gg1.
\end{align}

Hence, as a first step in any specific biological application involving extreme FPTs with small targets ($\eps\ll1$) and many searchers ($N\gg1$), one needs to determine if the extreme FPTs are in the regime represented by either \eqref{smalltargets} or \eqref{largeN}. For example, in the dendritic spine application described above, it is not \emph{a priori} clear that $N=10^{3}$ is sufficiently large to overcome the small Ryanodyne receptors ($\eps\approx0.003$) and make the extreme FPT on the order of only 2-3 milliseconds (which is much less than the diffusion timescale in this problem, $\tfrac{L^{2}}{D}\approx15$ milliseconds). Indeed, \cite{basnayake2019fast} developed detailed numerical Monte Carlo simulations to reach this conclusion.

Importantly, analytical approximations of statistics of extreme FPTs for small targets in general 3-dimensional domains are lacking. \cite{basnayake2019} derived a formal approximation, but this was proven to be false \citep{lawley2020esp}. Recent work found the leading order large $N$ behavior of all the moments of $T_{k,N}$, but it turns out this leading order behavior is independent of the target size \citep{lawley2020uni}. Hence, these results cannot determine if a particular application is in the regime represented by \eqref{smalltargets} or \eqref{largeN}.

In this paper, we apply the theory of extreme statistics to find a tractable approximation for the full probability distribution of extreme FPTs of diffusion. This rigorous approximation can be applied in many scenarios as it depends on only a few properties of the short time behavior of the survival probability of a single FPT. Indeed, as long as this short time behavior is known, this approximation can be immediately applied to scenarios involving small targets and thus can determine the influence of the competing limits of small targets ($\eps\ll1$) and many searchers ($N\gg1$).

We find this distribution by proving that a careful rescaling of extreme FPTs converges in distribution as the number of searchers grows. This limiting distribution is a type of Gumbel distribution and involves the so-called LambertW function (defined as the inverse of $f(z)=ze^{z}$ \citep{corless1996}). This analysis yields new explicit formulas for statistics of extreme FPTs (mean, variance, moments, etc.). These formulas are highly accurate and are accompanied by rigorous error estimates. Further, these formulas confirm and explain a conjecture by \cite{yuste2001} that extreme FPT statistics can be approximated by a certain infinite series involving iterated logarithms.

The rest of the paper is organized as follows. We first summarize our main results in section~\ref{main results}. In section~\ref{math}, we develop and state our precise mathematical results in more detail. We then illustrate these general results in a few examples in section~\ref{examples}. In the Discussion section, we describe relations to prior work and discuss applications of the theory. Finally, we collect all the mathematical proofs in an appendix.

%%%%%%%%%%%%%%%%%%%%%%%%%%%%%%%%%%%
\section{Main results}\label{main results}

Let $\{\tau_{n}\}_{n\ge1}$ be an iid sequence of FPTs with survival probability
\begin{align*}
S(t):=\P(\tau_{1}>t).
\end{align*}
Assume that $S(t)$ has the short time behavior,
\begin{align}\label{st}
1-S(t)
\sim At^{p}e^{-C/t}\quad\text{as }t\to0+,
\end{align}
for some constants $A>0$, $C>0$, and $p\in\R$. Throughout this work,
\begin{align*}
\text{``$f\sim g$'' means }f/g\to1.
\end{align*}
We emphasize that \eqref{st} is a generic behavior for diffusion processes that holds in many diverse scenarios (see the Discussion section for more details).

Letting $T_{N}$ denote the fastest FPT in \eqref{ffpt}, we prove the following convergence in distribution (Theorem~\ref{main}),
\begin{align}\label{cdmr}
\frac{T_{N}-b_{N}}{a_{N}}
\to_{\textup{d}} X\quad\text{as }N\to\infty,
\end{align}
where $X$ has a standard Gumbel distribution, $\P(X>x)=\exp(-e^{x})$, and
\begin{align}\label{ababmr}
\begin{split}
a_{N}
&%=\frac{C}{(\ln(AN))^{2}}
=\frac{b_{N}}{\ln(AN)},\quad
b_{N}
=\frac{C}{\ln(AN)},\quad\text{if }p=0,\\
a_{N}
&%=\frac{C}{p^{2}W(1+W)}
=\frac{b_{N}}{p(1+W)},\quad
b_{N}
=\frac{C}{pW},\quad\text{if }p\neq0,
\end{split}
\end{align}
and
\begin{align*}%\label{dubdub}
W
&=
\begin{cases}
W_{0}\big((C/p)(AN)^{1/p}\big) & \text{if }p>0,\\
W_{-1}\big((C/p)(AN)^{1/p}\big) & \text{if }p<0,
\end{cases}
\end{align*}
where $W_{0}(z)$ denotes the principal branch of the LambertW function and $W_{-1}(z)$ denotes the lower branch \citep{corless1996}. The LambertW function is a fairly standard function that is included in most modern computational software (it is sometimes called the product logarithm or the omega function). {\black Theorem~\ref{uf} gives the following alternative formulas for $a_{N}$ and $b_{N}$ which avoid the LambertW function,
\begin{align}\label{primes}
a_{N}'
&=\frac{C}{(\ln N)^{2}},\quad
b_{N}'
=\frac{C}{\ln N}
+\frac{Cp\ln(\ln(N))}{(\ln N)^{2}}
-\frac{C\ln(AC^{p})}{(\ln N)^{2}}.
\end{align}
In particular, all the statements in this section hold with $a_{N},b_{N}$ replaced by $a_{N}',b_{N}'$.}

The convergence in distribution in \eqref{cdmr} means that if $N\gg1$, then the distribution of the fastest FPT, $T_{N}$, is approximately Gumbel with shape parameter $b_{N}$ and scale parameter $a_{N}$. That is,
\begin{align*}
\P(T_{N}>t)
\approx\exp\Big[-\exp\Big(\frac{t-b_{N}}{a_{N}}\Big)\Big]\quad\text{if }N\gg1,
\end{align*}
where $a_{N},b_{N}$ are in \eqref{ababmr} {\black (or are replaced by $a_{N}',b_{N}'$ in \eqref{primes})}. Note that essentially all the statistical information about a Gumbel distribution is immediately available (mean, median, mode, variance, moments, probability density function, etc., see Proposition~\ref{basic} below). Therefore, this result provides all the statistical information for the fastest FPT (approximately for large $N$). For example, we prove that if $\E[T_{N}]<\infty$ for some $N\ge1$, then (Theorem~\ref{moments})
\begin{align*}
\E[T_{N}]
&=b_{N}-\gamma a_{N}+o(a_{N}),\\
\textup{Variance}(T_{N})
&=\frac{\pi^{2}}{6}a_{N}^{2}+o(a_{N}^{2}),
\end{align*}
where $\gamma\approx0.5772$ is the Euler-Mascheroni constant and $f(N)=o(a_{N}^{m})$ means $\lim_{N\to\infty}a_{N}^{-m}f(N)=0$.

We prove similar results for the $k$th fastest FPT, $T_{k,N}$, defined in \eqref{tkn}. In particular, we prove that the joint distribution of a rescaling of the $k$ fastest FPTs,
\begin{align*}
\left(\frac{T_{1,N}-b_{N}}{a_{N}},\dots,\frac{T_{k,N}-b_{N}}{a_{N}}\right),
\end{align*}
converges as $N\to\infty$ to a distribution that we give explicitly (Theorem~\ref{kth}). This result provides explicit approximations for statistics of $T_{k,N}$, including (Theorem~\ref{kth moment}),
\begin{align*}
\E[T_{k,N}]
%&=b_{N}-(\gamma-H_{k-1})a_{N}+o(a_{N})
&=b_{N}+\psi(k)a_{N}+o(a_{N})
=\E[T_{N}]+H_{k-1}a_{N}+o(a_{N}),\\
\textup{Variance}(T_{k,N})
&=\psi'(k)a_{N}^{2}+o(a_{N}^{2}),
\end{align*}
where $\psi(x)$ is the digamma function and $H_{k-1}=\sum_{r=1}^{k-1}\tfrac{1}{r}$ is the $(k-1)$-th harmonic number.

%%%%%%%%%%%%%%%%%%%%%%%%%%%%%%%%%%%
\section{Mathematical analysis}\label{math}

%%%%%%%%%%%%%%%%%%%%%%%%%%%%%%%%%%%
\subsection{Fastest FPT}

Let $\{\tau_{n}\}_{n\ge1}$ be an iid sequence of FPTs with survival probability $S(t):=\P(\tau_{1}>t)$. Define the fastest FPT, $T_{N}$, as in \eqref{ffpt}. Since the sequence $\{\tau_{n}\}_{n\ge1}$ is iid, it is immediate that the survival probability of $T_{N}$ is
\begin{align}\label{exact}
\P(T_{N}>t)
=(\P(\tau_{1}>t))^{N}
=(S(t))^{N}.
\end{align}
While \eqref{exact} is the exact distribution of $T_{N}$, this formula it is not particularly useful for understanding how the distribution depends on parameters or for calculating statistics of $T_{N}$. Furthermore, the full survival probability $S(t)$ of a single FPT is often unknown.

We thus seek a tractable approximation of \eqref{exact} for large $N$, which will thus depend only on the short time behavior of $S(t)$. Now, \eqref{exact} implies that the limiting distribution of $T_{N}$ for large $N$ is trivial,
\begin{align*}
\lim_{N\to\infty}\P(T_{N}>t)
=\begin{cases}
1 & \text{if }t<t^{*},\\
0 & \text{if }t>t^{*},
\end{cases}
\end{align*}
where $t^{*}:=\inf\{t>0:S(t)<1\}$. For nontrivial diffusion processes, we typically have $t^{*}=0$. To ameliorate this problem, we study the distribution of $T_{N}$ by finding a rescaling of $T_{N}$ that has a nontrivial limiting distribution for large $N$. Specifically, we find sequences $\{a_{N}\}_{N\ge1}$ and $\{b_{N}\}_{N\ge1}$ so that
\begin{align*}
\frac{T_{N}-b_{N}}{a_{N}}
\to_{\textup{d}}X\quad\text{as }N\to\infty,
\end{align*}
for some random variable $X$. In this paper, $\to_{\textup{d}}$ denotes convergence in distribution \citep{billingsley2013}, which means
\begin{align}\label{conv}
\P\left(\frac{T_{N}-b_{N}}{a_{N}}>x\right)
=(S(a_{N}x+b_{N}))^{N}
\to G(x)\quad\text{as }N\to\infty,
\end{align}
{\black for all $x\in\R$ where $G(x)=\P(X>x)$ is continuous.}

Remarkably, the Fisher-Tippett-Gnedenko Theorem states that if \eqref{conv} holds for a nondegenerate $G$, then $G$ must be either a Weibull, Frechet, or Gumbel distribution \citep{fisher1928}. This theorem is the cornerstone of extreme value theory, and applies to the minimum or maximum of any sequence of iid random variables \citep{colesbook,haanbook,falkbook}. Since the limiting distribution must be one of these three types, this classical theorem is an extreme value analog of the central limit theorem. We prove below that the typical short time behavior of $S(t)$ ensures that $G$ must be Gumbel. The following definition and proposition collects some facts about the Gumbel distribution.

\begin{definition}
A random variable $X$ has a \emph{Gumbel distribution} with location parameter $b\in\R$ and scale parameter $a>0$ if\footnote{Some authors define a Gumbel distribution slightly differently, by saying that $-X$ has a Gumbel distribution with shape $-b$ and scale $a$ if \eqref{xgumbel} holds.}
\begin{align}\label{xgumbel}
\P(X>x)
=\exp\Big[-\exp\Big(\frac{x-b}{a}\Big)\Big],\quad\text{for all } x\in\R.
\end{align}
If \eqref{xgumbel} holds, then we write
\begin{align*}
X=_{\textup{d}}\textup{Gumbel}(b,a).
\end{align*}
\end{definition}

\begin{proposition}\label{basic}
If $X=_{\textup{d}}\textup{Gumbel}(b,a)$, then its survival probability is in \eqref{xgumbel}, its probability density function is
\begin{align*}
f_{X}(x)
=\frac{1}{a}\exp\Big[\frac{x-b}{a}-\exp\Big(\frac{x-b}{a}\Big)\Big],\quad x\in\R,
\end{align*}
and its moment generating function is
\begin{align*}
M_{X}(t)
:=\E[e^{tX}]
=\Gamma(1+at)e^{bt},\quad t\in\R,
\end{align*}
where $\Gamma(\cdot)$ denotes the gamma function. Hence, the mean and variance are
\begin{align*}
\E[X]
=b-\gamma a,
\quad
\textup{Variance}(X)
=\frac{\pi^{2}}{6}a^{2},
\end{align*}
where $\gamma\approx0.5772$ is the Euler-Mascheroni constant. The mode and median are
\begin{align*}
\textup{Mode}(X)
=b,
\quad
\textup{Median}(X)
=b+a\ln(\ln(2))
\approx b-0.3665a.
\end{align*}
\end{proposition}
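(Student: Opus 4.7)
The proposition is a collection of standard facts about the Gumbel distribution, so the plan is to reduce everything to direct computation from the defining survival function \eqref{xgumbel}. First I would obtain the density by differentiating the CDF $F_X(x) = 1-\P(X>x)$; with $u=(x-b)/a$ the chain rule immediately gives the stated $f_X(x)$. This takes care of the density formula.

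The central computation is the moment generating function, from which all of the mean, variance, and higher moment statements follow. I would evaluate
\begin{align*}
M_X(t) = \int_{-\infty}^{\infty} e^{tx}\,\frac{1}{a}\exp\!\Big[\tfrac{x-b}{a}-\exp\!\big(\tfrac{x-b}{a}\big)\Big]\,dx
\end{align*}
by first substituting $y=(x-b)/a$ to factor out $e^{bt}$, then substituting $u=e^{y}$ so that $du = e^{y}\,dy$. The integrand collapses to $u^{at}e^{-u}$ on $(0,\infty)$, which is exactly $\Gamma(1+at)$, yielding $M_X(t)=\Gamma(1+at)e^{bt}$. This substitution is the one piece of real work; everything else is downstream.

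With the MGF in hand, the mean follows from $\E[X]=M_X'(0)=b+a\Gamma'(1)=b-\gamma a$, using the classical identity $\Gamma'(1)=-\gamma$. For the variance, I would use $\E[X^2]=M_X''(0)=b^2-2\gamma a b + a^2\Gamma''(1)$ together with $\Gamma''(1)=\gamma^2+\pi^2/6$ (equivalently, differentiate the log-gamma series $\psi'(1)=\pi^2/6$); subtracting $(\E[X])^2$ gives $\textup{Variance}(X)=a^2\pi^2/6$.

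The mode and median are elementary. For the mode, $f_X'(x)$ factors as $f_X(x)\cdot\frac{1}{a}(1-e^{(x-b)/a})$, which vanishes only at $x=b$, and the sign change confirms a maximum. For the median, setting \eqref{xgumbel} equal to $1/2$ gives $\exp((x-b)/a)=\ln 2$, hence $x=b+a\ln(\ln 2)$. I do not anticipate any genuine obstacle here; the only step requiring care is tracking constants in the MGF substitution, since a sign error there would propagate through the mean and variance formulas.
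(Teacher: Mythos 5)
Your proposal is correct and is exactly the kind of direct computation the paper intends; the paper's proof simply states that all of these facts follow directly from the defining survival function \eqref{xgumbel}, which is what you carry out. Every step checks out, including the change of variables $u=e^{y}$ to produce $\Gamma(1+at)$, the identities $\Gamma'(1)=-\gamma$ and $\Gamma''(1)=\gamma^2+\pi^2/6$, and the sign handling in $\exp(-\exp((m-b)/a))=1/2 \Rightarrow \exp((m-b)/a)=\ln 2$.
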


Now, it was recently proven that under very general assumptions, the survival probability of a single diffusive FPT has the following short time behavior,
\begin{align}\label{C}
\lim_{t\to0+}t\ln(1-S(t))=-C<0,
\end{align}
where $C=L^{2}/(4D)>0$ and $D$ is a characteristic diffusivity and $L$ is a certain geodesic distance \citep{lawley2020uni}, as long as the diffusive searchers cannot start arbitrarily close to the target. The next proposition shows that if \eqref{C} holds, then any nondegenerate limiting distribution $G$ in \eqref{conv} must be Gumbel.

\begin{proposition}\label{easy}
Let $\{\tau_{n}\}_{n\ge1}$ be an iid sequence of nonnegative random variables with $S(t):=\P(\tau_{1}>t)$, define $T_{N}:=\min\{\tau_{1},\dots,\tau_{N}\}$, and suppose \eqref{C} holds. If there exists sequences $\{a_{N}\}_{N\ge1}$ and $\{b_{N}\}_{N\ge1}$ with $a_{N}>0$ and $b_{N}\in\R$ so that
\begin{align*}
\frac{T_{N}-b_{N}}{a_{N}}
\to_{\textup{d}}X\quad\text{as }N\to\infty,
\end{align*}
and $X$ has a nondegenerate distribution, then $X=_{\textup{d}}\textup{Gumbel}(b,a)$ for some $b\in\R$ and $a>0$.
\end{proposition}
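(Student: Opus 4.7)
The plan is to invoke the Fisher-Tippett-Gnedenko theorem and rule out the Fréchet and Weibull cases using hypothesis \eqref{C}. First I would reformulate in terms of a maximum by setting $Y_n := -\tau_n$, so that $M_N := \max\{Y_1,\dots,Y_N\} = -T_N$; the hypothesized convergence $(T_N-b_N)/a_N \to_{\textup{d}} X$ is then equivalent to $(M_N+b_N)/a_N \to_{\textup{d}} -X$ with $-X$ still nondegenerate. By Fisher-Tippett-Gnedenko, the distribution of $-X$ must be of one of three location-scale types---Fréchet, Weibull, or Gumbel---so it suffices to exclude the first two.

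To eliminate the Fréchet type, note that $\tau_n \ge 0$ forces the upper endpoint of $Y_n$ to satisfy $y^* \le 0$. Moreover, \eqref{C} requires $1-S(t)>0$ for all sufficiently small $t>0$, so in fact $y^*=0$. Since the Fréchet domain of attraction requires $y^*=+\infty$, this case is impossible.

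To eliminate the Weibull type, I would invoke Gnedenko's characterization: a distribution with finite upper endpoint $y^*$ lies in the Weibull domain with index $\alpha>0$ iff, for every $s>0$,
\begin{align*}
\lim_{h\to 0^+}\frac{1-F_Y(y^*-sh)}{1-F_Y(y^*-h)}=s^{\alpha}.
\end{align*}
In our setting $1-F_Y(-h)=\P(\tau_n<h)\sim 1-S(h)$, and \eqref{C} gives $1-S(h)=\exp(-(C/h)(1+o(1)))$ as $h\to 0^+$, so for any $s>0$,
\begin{align*}
\frac{1-F_Y(-sh)}{1-F_Y(-h)}
\sim \exp\!\Big(\tfrac{C}{h}\bigl(1-\tfrac{1}{s}\bigr)(1+o(1))\Big)
\to
\begin{cases}
0 & \text{if } s<1,\\
\infty & \text{if } s>1,
\end{cases}
\end{align*}
as $h\to 0^+$. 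This ratio is never $s^{\alpha}$ for any finite $\alpha>0$, so the Weibull case is excluded, leaving the Gumbel type as the only possibility. Translating back, $X=_{\textup{d}}\textup{Gumbel}(b,a)$ for some $b\in\R$ and $a>0$.

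The main obstacle is essentially bookkeeping: invoking the correct form of Fisher-Tippett-Gnedenko together with Gnedenko's domain-of-attraction criteria, and carefully handling sign conventions and any atoms of $\tau_n$ when passing between minima and maxima via $Y_n=-\tau_n$. Everything else follows from the rapid-variation nature of $1-S(t)$ encoded in \eqref{C}, which prevents any regularly-varying (Weibull-type) or heavy-tailed (Fréchet-type) limit behavior.
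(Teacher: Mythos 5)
Your proposal is correct and follows essentially the same route as the paper: reformulate via $Y_n=-\tau_n$, invoke Fisher--Tippett--Gnedenko, rule out Fr\'echet from the finite upper endpoint, and rule out Weibull by showing that \eqref{C} makes the regular-variation tail-ratio criterion fail. The only minor difference is cosmetic: the paper tests the Weibull criterion at the single point $x=2$ using the explicit bound $|h(t)|\le C/(2t)$, whereas you verify the divergence of the ratio for all $s\neq 1$ directly from the $(1+o(1))$ form of $\ln(1-S(t))$; both are equivalent.
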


The condition in \eqref{C} implies that
\begin{align*}
S(t)
= 1-e^{-C/t+h(t)},
\end{align*}
where $h(t)$ is some function satisfying $th(t)\to0$ as $t\to0+$. The following proposition gives precise conditions on $h(t)$ which yield rescalings $\{a_{N}\}_{N\ge1}$ and $\{b_{N}\}_{N\ge1}$ so that $(T_{N}-b_{N})/a_{N}$ converges in distribution to a Gumbel random variable.

\begin{proposition}\label{hh}
Let $\{\tau_{n}\}_{n\ge1}$ be an iid sequence of nonnegative random variables with $S(t):=\P(\tau_{1}>t)$, define $T_{N}:=\min\{\tau_{1},\dots,\tau_{N}\}$, and assume
\begin{align*}
1-S(t)
\sim1-S_{0}(t)\quad\text{as }t\to0+,
\end{align*}
where
\begin{align*}
S_{0}(t)
=
1-e^{-C/t+h(t)},\quad\text{if }t>0,
\end{align*}
for some constant $C>0$ and some function $h(t)$ that is twice-continuously differentiable for $t>0$ and satisfies
\begin{align}\label{hc}
\lim_{t\to0+}th(t)
=\lim_{t\to0+}t^{2}h'(t)
=\lim_{t\to0+}t^{4}h''(t)
=0.
\end{align}
Then
\begin{align*}
\frac{T_{N}-b_{N}}{a_{N}}
\to_{\textup{d}}
X=_{\textup{d}}\textup{Gumbel}(0,1)\quad\text{as }N\to\infty,
\end{align*}
where
\begin{align}\label{ab}
a_{N}
&:=\frac{-1}{NS_{0}'(b_{N})}>0,\quad
b_{N}
:=S_{0}^{-1}(1-1/N)>0,\quad N\ge1.
\end{align}
\end{proposition}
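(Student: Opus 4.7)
The plan is to prove convergence in distribution by showing $(S(a_Nx+b_N))^N\to\exp(-e^x)$ for every $x\in\R$, which by continuity of the standard Gumbel CDF gives $(T_N-b_N)/a_N\tod\textup{Gumbel}(0,1)$. Since $1-S(t)\sim 1-S_0(t)$ as $t\to 0+$ and $a_Nx+b_N\to 0$ (verified below), it suffices to establish the corresponding statement with $S$ replaced by $S_0$, and via $-\ln S_0(t)\sim 1-S_0(t)$ the task reduces to proving
\[
N\bigl(1-S_0(a_Nx+b_N)\bigr)\longrightarrow e^x\quad\text{as }N\to\infty.
\]

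I would then introduce $\phi(t):=-\ln(1-S_0(t))=C/t-h(t)$, so that $1-S_0=e^{-\phi}$ and $S_0'=\phi'\,e^{-\phi}$. The defining relations in \eqref{ab} become the clean identities $\phi(b_N)=\ln N$ and $\phi'(b_N)=-1/a_N$. Writing $\phi'(t)=-C/t^2-h'(t)$ and using the hypothesis $t^2h'(t)\to 0$, I get $a_N\sim b_N^2/C$; likewise, $C/b_N-h(b_N)=\ln N$ together with $th(t)\to 0$ yields $b_N\sim C/\ln N\to 0$, and hence $a_N\to 0$ as well. In particular $b_N+a_Nx\sim b_N\to 0$ for each fixed $x$, which justifies the reduction above.

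With these asymptotics in place, I apply Taylor's theorem with Lagrange remainder to $\phi$ about $b_N$: for each $x\in\R$ and all sufficiently large $N$ there exists $\xi_N$ between $b_N$ and $b_N+a_Nx$ with
\[
\phi(b_N+a_Nx)=\phi(b_N)+a_Nx\,\phi'(b_N)+\tfrac12(a_Nx)^2\phi''(\xi_N)=\ln N-x+\tfrac12(a_Nx)^2\phi''(\xi_N).
\]
Since $a_N=o(b_N)$, we have $\xi_N/b_N\to 1$, and $\phi''(t)=2C/t^3-h''(t)$ splits the remainder into two pieces. The first is $a_N^2\cdot 2C/\xi_N^3=O(b_N)\to 0$, and since $a_N^2\sim b_N^4/C^2\sim\xi_N^4/C^2$, the second satisfies $a_N^2h''(\xi_N)\sim(\xi_N^4/C^2)h''(\xi_N)\to 0$ by the hypothesis $t^4h''(t)\to 0$. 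Therefore $\phi(b_N+a_Nx)=\ln N-x+o(1)$, which gives $N(1-S_0(b_N+a_Nx))=e^{x+o(1)}\to e^x$, completing the argument.

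The main obstacle is this uniform control of the second-order Taylor remainder along the intermediate point $\xi_N$; the three limit conditions in \eqref{hc} are exactly calibrated to make each piece of the remainder vanish, and weakening any of them would allow the remainder to persist in the limit and spoil the Gumbel convergence.
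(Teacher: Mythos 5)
Your proof is correct, and it takes a genuinely different route from the paper's. The paper verifies the von Mises sufficient condition $\lim_{t\to0+}\tfrac{\dd}{\dd t}\bigl((1-S_0(t))/S_0'(t)\bigr)=0$ and then invokes two blackbox results from the extreme-value-theory literature (Theorem 2.1.2 in Falk's book for Gumbel convergence with \emph{some} normalizing constants, and Remark 1.1.9 in de Haan's book to replace those with the specific $a_N,b_N$ of \eqref{ab}), before reducing the statement for $S_0$ to the one for $S$. You instead make the argument self-contained: working with $\phi(t)=-\ln(1-S_0(t))=C/t-h(t)$, you extract the clean identities $\phi(b_N)=\ln N$ and $\phi'(b_N)=-1/a_N$ directly from \eqref{ab}, derive the growth rates $b_N\sim C/\ln N$ and $a_N\sim b_N^2/C$ from the hypotheses $th(t)\to0$ and $t^2h'(t)\to0$, and then prove $N(1-S_0(a_Nx+b_N))\to e^x$ by a second-order Taylor expansion of $\phi$ about $b_N$, with the remainder $\tfrac12(a_Nx)^2\phi''(\xi_N)$ controlled exactly by $t^4h''(t)\to0$ and the $2C/t^3$ term contributing only $O(b_N)$. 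The reduction to $S_0$ and the passage from $N(1-S(\cdot))\to e^x$ to $(S(\cdot))^N\to e^{-e^x}$ at the end match the paper. What the paper's route buys is brevity by outsourcing to standard extreme-value machinery; what your route buys is transparency --- it makes explicit how each of the three conditions in \eqref{hc} controls one piece of the second-order error, which your closing remark identifies correctly. One small place where both you and the paper are terse: one should justify that $b_N=S_0^{-1}(1-1/N)$ is well-defined and $b_N\to0$, which follows because $\phi(t)\to\infty$ and $\phi'(t)<0$ for all small enough $t$ (from $th(t)\to0$ and $t^2h'(t)\to0$), so $S_0$ is a continuous bijection from a right-neighborhood of $0$ onto a left-neighborhood of $1$.
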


{\black In \eqref{ab}, $S_{0}^{-1}(1-1/N)$ denotes the inverse of $S_{0}$, which must exist for large $N$ by the assumptions on $h(t)$.} As we will see, it is typically the case that $h(t)=\ln(At^{p})$, which clearly satisfies \eqref{hc}. In this case, we work out the rescalings $\{a_{N}\}_{N\ge1}$ and $\{b_{N}\}_{N\ge1}$.

\begin{theorem}\label{main}
Let $\{\tau_{n}\}_{n\ge1}$ be an iid sequence of nonnegative random variables with $S(t):=\P(\tau_{1}>t)$, define $T_{N}:=\min\{\tau_{1},\dots,\tau_{N}\}$, and assume there exists constants $C>0$, $A>0$, and $p\in\R$ so that
\begin{align*}
1-S(t)\sim At^{p}e^{-C/t}\quad\text{as }t\to0+.
\end{align*}
Then
\begin{align}\label{cd}
\frac{T_{N}-b_{N}}{a_{N}}
\to_{\textup{d}}
X=_{\textup{d}}\textup{Gumbel}(0,1)\quad\text{as }N\to\infty,
\end{align}
where %$a_{N}>0$ and $b_{N}>0$ are
\begin{align}\label{abab}
\begin{split}
a_{N}
&%=\frac{C}{(\ln(AN))^{2}}
=\frac{b_{N}}{\ln(AN)},\quad
b_{N}
=\frac{C}{\ln(AN)},\quad\text{if }p=0,\\
a_{N}
&%=\frac{C}{p^{2}W(1+W)}
=\frac{b_{N}}{p(1+W)},\quad
b_{N}
=\frac{C}{pW},\quad\text{if }p\neq0,
\end{split}
\end{align}
and
\begin{align}\label{dubdub}
W
&=
\begin{cases}
W_{0}\big((C/p)(AN)^{1/p}\big) & \text{if }p>0,\\
W_{-1}\big((C/p)(AN)^{1/p}\big) & \text{if }p<0,
\end{cases}
\end{align}
where $W_{0}(z)$ denotes the principal branch of the LambertW function and $W_{-1}(z)$ denotes the lower branch \citep{corless1996}.
\end{theorem}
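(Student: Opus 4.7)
The plan is to deduce Theorem~\ref{main} from Proposition~\ref{hh} by checking its hypotheses for the particular function $h$ that matches the prescribed short-time asymptotic, and then explicitly solving the defining equations \eqref{ab} for $a_N$ and $b_N$. Since
\begin{align*}
1-S(t)\sim At^{p}e^{-C/t}
= e^{-C/t+\ln(At^{p})}\quad\text{as }t\to0+,
\end{align*}
the natural choice is $h(t)=\ln(At^{p})=\ln A+p\ln t$, giving $S_{0}(t)=1-Ae^{-C/t}t^{p}$. First I would verify \eqref{hc}: $th(t)=t\ln A+pt\ln t\to0$, $t^{2}h'(t)=pt\to0$, and $t^{4}h''(t)=-pt^{2}\to0$, so Proposition~\ref{hh} applies and yields the convergence \eqref{cd} with $a_{N},b_{N}$ defined by \eqref{ab}.

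Next I would compute $b_{N}$ from the equation $S_{0}(b_{N})=1-1/N$, which after rearrangement becomes
\begin{align*}
AN\,b_{N}^{p}\,e^{-C/b_{N}}=1.
\end{align*}
When $p=0$ this collapses to $b_{N}=C/\ln(AN)$. When $p\neq0$, I would substitute $b_{N}=C/(pW)$ to turn the equation into $W^{p}e^{pW}=AN(C/p)^{p}$, i.e.\ $(We^{W})^{p}=AN(C/p)^{p}$. Taking $p$-th roots (with care about signs) gives $We^{W}=(C/p)(AN)^{1/p}$, whose solution is exactly a value of the LambertW function. The branch selection comes from demanding $b_{N}>0$ and $b_{N}\to 0^{+}$ as $N\to\infty$: for $p>0$ the argument $(C/p)(AN)^{1/p}\to+\infty$, forcing $W=W_{0}(\cdot)\to+\infty$; for $p<0$ the argument tends to $0^{-}$, and only the lower branch $W_{-1}$ sends $W\to-\infty$ so that $b_{N}=C/(pW)\to 0^{+}$. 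This reproduces \eqref{dubdub}.

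Then I would compute $a_{N}$ from $a_{N}=-1/(NS_{0}'(b_{N}))$. A direct differentiation gives $S_{0}'(t)=-At^{p-1}e^{-C/t}(p+C/t)$, so using $AN b_{N}^{p}e^{-C/b_{N}}=1$ we get
\begin{align*}
NS_{0}'(b_{N})=-\frac{p+C/b_{N}}{b_{N}},
\qquad\text{hence}\qquad
a_{N}=\frac{b_{N}}{p+C/b_{N}}.
\end{align*}
For $p=0$ this yields $a_{N}=b_{N}^{2}/C=b_{N}/\ln(AN)$; for $p\neq0$ the relation $C/b_{N}=pW$ gives $p+C/b_{N}=p(1+W)$, so $a_{N}=b_{N}/(p(1+W))$, matching \eqref{abab}.

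The only genuinely delicate step is the correct identification of the LambertW branch: the algebraic manipulation to reach $We^{W}=(C/p)(AN)^{1/p}$ is clean, but to single out $W_{0}$ versus $W_{-1}$ one must use the sign of $p$, the positivity of $b_{N}$, and the asymptotic requirement $b_{N}\to 0^{+}$ (which is consistent with $T_{N}\to 0$ in distribution). Everything else is a direct application of Proposition~\ref{hh} together with routine calculus.
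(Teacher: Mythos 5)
Your proposal is correct and follows essentially the same route as the paper's (very terse) proof: apply Proposition~\ref{hh} with $h(t)=\ln(At^{p})$, verify \eqref{hc}, and then solve the defining relations \eqref{ab} for $a_N$ and $b_N$ using the LambertW substitution $b_N=C/(pW)$ and selecting the branch via the requirement $b_N\to 0^+$. The paper simply states this in one sentence; you have supplied the intermediate algebra, and the computations (the verification of \eqref{hc}, the equation $AN\,b_N^p e^{-C/b_N}=1$, the differentiation $S_0'(t)=-At^{p-1}e^{-C/t}(p+C/t)$, and the branch determination) all check out.
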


If the convergence in distribution in \eqref{cd} holds for some rescalings $\{a_{N}\}_{N\ge1}$ and $\{b_{N}\}_{N\ge1}$, then we also have that
\begin{align}\label{cc0}
\frac{T_{N}-b_{N}'}{a_{N}'}
\to_{\textup{d}}
X=_{\textup{d}}\textup{Gumbel}(0,1)\quad\text{as }N\to\infty,
\end{align}
for any rescalings $\{a_{N}'\}_{N\ge1}$ and $\{b_{N}'\}_{N\ge1}$ that satisfy \citep{peng2012}
\begin{align}\label{cc1}
\lim_{N\to\infty}\frac{a_{N}'}{a_{N}}
=1,\quad
\lim_{N\to\infty}\frac{b_{N}'-b_{N}}{a_{N}}
=0.
\end{align}
The following theorem gives rescalings which avoid the LambertW functions used in Theorem~\ref{main} and are valid for any $p\in\R$.

\begin{theorem}\label{uf}
Under the assumptions of Theorem~\ref{main}, we have that
\begin{align*}
\frac{T_{N}-b_{N}'}{a_{N}'}
\to_{\textup{d}}
X=_{\textup{d}}\textup{Gumbel}(0,1)\quad\text{as }N\to\infty,
\end{align*}
where %$a_{N}>0$ and $b_{N}>0$ are
\begin{align}\label{ababuf}
\begin{split}
a_{N}'
&=\frac{C}{(\ln N)^{2}},\quad
b_{N}'
=\frac{C}{\ln N}
+\frac{Cp\ln(\ln(N))}{(\ln N)^{2}}
%+\frac{Cp\ln(C^{-1}A^{-1/p})}{(\ln N)^{2}}.
-\frac{C\ln(AC^{p})}{(\ln N)^{2}}.
\end{split}
\end{align}
\end{theorem}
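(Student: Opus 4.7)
The plan is to invoke the stability of the limiting Gumbel distribution under changes of rescaling recorded in equations \eqref{cc0}--\eqref{cc1}. Since Theorem~\ref{main} already establishes the convergence in distribution with the LambertW-based $(a_N, b_N)$, it suffices to verify
\begin{align*}
\lim_{N \to \infty} \frac{a_N'}{a_N} = 1, \qquad \lim_{N \to \infty} \frac{b_N' - b_N}{a_N} = 0,
\end{align*}
so the entire proof reduces to an asymptotic comparison of the two prescriptions. Because $a_N' = C/(\ln N)^2$, these two conditions together amount to showing $a_N \sim C/(\ln N)^2$ and $b_N - b_N' = o(1/(\ln N)^2)$.

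The case $p = 0$ is immediate: expanding $b_N = C/\ln(AN)$ as a geometric series in $\ln A/\ln N$ yields $b_N = C/\ln N - C\ln A/(\ln N)^2 + O(1/(\ln N)^3)$, which matches $b_N'$ (specialized to $p=0$) up to an error of order $1/(\ln N)^3$, and $a_N = C/(\ln(AN))^2 \sim C/(\ln N)^2$. For $p \neq 0$, the essential tool is the standard asymptotic expansion of the LambertW function: $W_0(z) = \ln z - \ln\ln z + O(\ln\ln z/\ln z)$ as $z \to \infty$, and its lower-branch counterpart $W_{-1}(z) = \ln(-z) - \ln(-\ln(-z)) + o(1)$ as $z \to 0^-$. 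Substituting $z_N := (C/p)(AN)^{1/p}$ and using $\ln(AN) = \ln N + \ln A$, both branches yield, after simplification, the unified expansion
\begin{align*}
pW = \ln N - p\ln\ln N + \ln(AC^p) + o(1) \quad \text{as } N \to \infty.
\end{align*}
Inverting via another geometric series then gives
\begin{align*}
b_N = \frac{C}{pW} = \frac{C}{\ln N} + \frac{Cp\ln\ln N - C\ln(AC^p)}{(\ln N)^2} + O\!\left(\frac{(\ln\ln N)^2}{(\ln N)^3}\right) = b_N' + o\!\left(\frac{1}{(\ln N)^2}\right),
\end{align*}
while $a_N = b_N/(p(1+W)) \sim (C/\ln N)/\ln N = C/(\ln N)^2 = a_N'$, confirming both limits in \eqref{cc1}.

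The main obstacle is the bookkeeping required in the $W_{-1}$ case: for $p < 0$ the argument $z_N$ is negative and tends to $0^-$, and one has to track the cancellations among $\ln(-z_N) = (1/p)\ln(AN) + \ln(-C/p)$, the inner logarithm $\ln(-\ln(-z_N)) = \ln\ln N - \ln(-1/p) + o(1)$, and the sign of $p$, to confirm that the combination $pW$ produces the \emph{same} constants $-p\ln\ln N + \ln(AC^p)$ as in the $W_0$ case (in particular, the spurious $\ln p$ or $\ln(-1/p)$ terms must cancel against $p\ln(\pm C/p)$). Once this uniform expansion is established, the remainder is a mechanical two-term geometric-series expansion of $1/(pW)$ about $1/\ln N$.
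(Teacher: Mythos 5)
Your proposal is correct and is precisely the approach the paper takes: the paper's proof reads, in full, ``The theorem follows immediately from Theorem~\ref{main} and \eqref{cc0}--\eqref{cc1}.'' Your write-up simply spells out the asymptotic verification of \eqref{cc1} (via the LambertW expansion $W \sim \ln|z| - \ln|\ln|z||$ on the appropriate branch and a one-term geometric-series inversion of $1/(pW)$) that the paper leaves to the reader.
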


The conclusions of Propositions~\ref{easy}-\ref{hh} and Theorems~\ref{main}-\ref{uf} concern convergence in distribution. In general, convergence in distribution does not imply moment convergence \citep{billingsley2013}. That is, $X_{N}\tod X$ does not necessarily imply $\E[(X_{N})^{m}]\to\E[X^{m}]$ for $m>0$. However, \cite{pickands1968} proved that convergence in distribution does imply moment convergence for extreme values.

\begin{theorem}\label{moments}
Under the assumptions of Theorem~\ref{main} with $\{a_{N}\}_{N\ge1}$ and $\{b_{N}\}_{N\ge1}$ given by either \eqref{abab} or \eqref{ababuf}, assume further that $\E[T_{N}]<\infty$ for some $N\ge1$. Then for each moment $m\in(0,\infty)$, we have that
\begin{align*}
\E\left[\left(\frac{T_{N}-b_{N}}{a_{N}}\right)^{m}\right]
\to\E[X^{m}]\quad\text{as }N\to\infty,\quad\text{where }X=_{\textup{d}}\textup{Gumbel}(0,1).
\end{align*}
Therefore,
\begin{align*}
\E[(T_{N}-b_{N})^{m}]
=a_{N}^{m}\E[X^{m}]+o(a_{N}^{m}),%\quad\text{where }\lim_{N\to\infty}\frac{o(a_{N}^{m})}{a_{N}^{m}}=0.
\end{align*}
where $f(N)=o(a_{N}^{m})$ means $\lim_{N\to\infty}a_{N}^{-m}f(N)=0$. Further, if $m>0$ is an integer, then $\E[X^{m}]$ can be calculated explicitly by Proposition~\ref{basic}. For example, we have that
\begin{align*}
\E[T_{N}]
&=b_{N}-\gamma a_{N}+o(a_{N}),\\
\textup{Variance}(T_{N})
&=\frac{\pi^{2}}{6}a_{N}^{2}+o(a_{N}^{2}),
\end{align*}
where $\gamma\approx0.5772$ is the Euler-Mascheroni constant.
\end{theorem}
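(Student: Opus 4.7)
The plan is to combine the convergence in distribution from Theorem~\ref{main} with a uniform integrability argument, following Pickands' (1968) approach to moment convergence for extreme order statistics. Recall that $Y_N \tod X$ strengthens to $\E[Y_N^m] \to \E[X^m]$ exactly when $\{|Y_N|^m\}_{N\ge 1}$ is uniformly integrable, which I would verify by exhibiting a uniform bound on $\E[|Y_N|^{m+\delta}]$ for some $\delta>0$, where $Y_N:=(T_N-b_N)/a_N$.

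First, I would show that $\E[T_N^m]<\infty$ for every $m>0$ and all sufficiently large $N$: the hypothesis $\E[T_{N_0}]<\infty$ gives $\int_0^\infty S(t)^{N_0}\,\dd t < \infty$, and since $S$ is non-increasing, the standard fact $tf(t)\to 0$ for non-increasing integrable $f$ yields $S(t)^{N_0} = o(1/t)$ as $t\to\infty$. Taking $N \ge (m+1)N_0$ and writing $S(t)^N = [S(t)^{N_0}]^{N/N_0}$ then makes $t^{m-1}S(t)^N$ integrable, so $\E[T_N^m]<\infty$. Next I would establish uniform integrability of $|Y_N|^m$. The upper tail $\P(Y_N>y) = S(a_N y + b_N)^N$ converges to $\exp(-e^y)$, and by the short-time expansion \eqref{st} combined with the explicit choice of $a_N,b_N$, this decay can be made uniform in $N$ strongly enough to dominate $y^{m+\delta-1}$. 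The lower tail $\P(Y_N<-y) = 1 - S(b_N - a_N y)^N$ (for $0<y<b_N/a_N$) is handled by substituting \eqref{st} into $N\ln S(t)$ and tracking the resulting exponential decay in $y$.

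The hard part will be the lower-tail estimate, where the singular short-time behavior $\ln S(t) \sim -C/t$ must be delicately aligned with the decay rates of $a_N$ and $b_N$ (which themselves depend intricately on $p$ via the LambertW function) to produce bounds uniform in $N$; this is precisely the place where the fine structure of the rescaling in Theorem~\ref{main} enters the argument. Once $L^m$ convergence is in hand, the explicit formulas follow immediately from Proposition~\ref{basic}: the identity $\E[X]=-\gamma$ together with linearity gives $\E[T_N] = b_N - \gamma a_N + o(a_N)$, while combining the $L^2$ limit with $\E[X^2]=\gamma^2+\pi^2/6$ and the mean formula yields $\textup{Variance}(T_N) = (\pi^2/6)\,a_N^2 + o(a_N^2)$. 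The same argument applies verbatim with $(a_N',b_N')$ from Theorem~\ref{uf} in place of $(a_N,b_N)$, since \eqref{cc0}--\eqref{cc1} ensure both the Gumbel limit and the scale equivalence needed for uniform integrability survive the substitution.
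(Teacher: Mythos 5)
Your plan is sound at the top level and it mirrors the paper's strategy—establish finiteness of moments, then upgrade the distributional convergence of Theorem~\ref{main} to moment convergence—but there is a real gap in the execution. The paper disposes of the uniform-integrability step in a single stroke: once $\E[T_N^m]<\infty$ is known for some $N$, it applies Theorem 2.1 of \cite{pickands1968}, which already packages the tail estimates needed to pass from $\tod$ to $L^m$ convergence for sample extremes. You instead propose to re-derive uniform integrability from scratch by estimating both tails of $(T_N-b_N)/a_N$. You correctly identify the lower tail as the delicate part (since $T_N\ge 0$, $\P\bigl((T_N-b_N)/a_N<-y\bigr)=1-S(b_N-a_N y)^N$ involves $S$ precisely in the singular regime $t\to0+$, and the needed bound $\lesssim e^{-cy}$ uniformly in $N$ must be extracted by aligning the short-time asymptotics \eqref{st} with the explicit $a_N,b_N$ of \eqref{abab}), but you do not carry out that estimate—you only say it ``is handled by substituting \eqref{st}... and tracking the resulting exponential decay.'' That is exactly the content of the cited Pickands theorem, so as written your argument does not close. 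To turn your proposal into a proof you would either need to actually produce the uniform tail bounds, or simply invoke Pickands' result as the paper does.

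On the positive side, your finiteness-of-moments argument is a correct and slightly more self-contained alternative to the paper's. The paper splits into $m\in(0,1)$ (trivial) and $m\ge1$ (citing an earlier work for $\E[(T_{2^{m-1}N})^m]<\infty$), whereas you use the elementary fact that $\int_0^\infty S(t)^{N_0}\,\dd t<\infty$ with $S^{N_0}$ non-increasing forces $S(t)^{N_0}=o(1/t)$, so that $t^{m-1}S(t)^N=o(t^{-2})$ for $N\ge(m+1)N_0$. That reasoning is valid. Your final paragraph—deducing the explicit formulas for $\E[T_N]$ and $\textup{Variance}(T_N)$ from Proposition~\ref{basic}, and noting via \eqref{cc0}--\eqref{cc1} that the result survives the substitution $(a_N,b_N)\mapsto(a_N',b_N')$—is also correct and matches the paper.
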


%%%%%%%%%%%%%%%%%%%%%%%%%%%%%%%%%%%
\subsection{$k$th fastest FPT}

We now extend the results in the previous subsection on the fastest FPT to the $k$th fastest FPT,
\begin{align*}
T_{k,N}
:=\min\big\{\{\tau_{1},\dots,\tau_{N}\}\backslash\cup_{j=1}^{k-1}\{T_{j,N}\}\big\},\quad k\in\{1,\dots,N\},
\end{align*}
where $T_{1,N}:=T_{N}$.

\begin{theorem}\label{kth}
Under the assumptions of Theorem~\ref{main} with $\{a_{N}\}_{N\ge1}$ and $\{b_{N}\}_{N\ge1}$ given by either \eqref{abab} or \eqref{ababuf}, we have that for each fixed $k\ge1$, 
\begin{align}\label{cd1}
\frac{T_{k,N}-b_{N}}{a_{N}}
\to_{\textup{d}}
X_{k}\quad\text{as }N\to\infty,
\end{align}
where $X_{k}$ has the probability density function,
\begin{align}\label{xkpdf}
f_{X_{k}}(x)
%=\exp(-e^{x})\sum_{r=0}^{k-1}\frac{e^{rx}}{r!},\quad\text{for all } x\in\R.
=\frac{\exp(kx-e^{x})}{(k-1)!},\quad\text{for all } x\in\R.
\end{align}
Furthermore, for each fixed $k\ge1$, we have the following convergence in distribution for the joint random variables,
\begin{align}\label{cd2}
\left(\frac{T_{1,N}-b_{N}}{a_{N}},\dots,\frac{T_{k,N}-b_{N}}{a_{N}}\right)
\to_{\textup{d}}
\mathbf{X}^{(k)}
=(X_{1},\dots,X_{k})\in\R^{k}\quad\text{as }N\to\infty,
\end{align}
where the joint probability density function of $\mathbf{X}^{(k)}\in\R^{k}$ is
\begin{align*}
f_{\mathbf{X}^{(k)}}(x_{1},\dots,x_{k})
&=\begin{cases}
\exp(-e^{x_{k}})\prod_{r=1}^{k}e^{x_{r}} & \text{if $x_{1}\le\dots\le x_{k}$},\\
0 & \text{otherwise}.
\end{cases}
\end{align*}
\end{theorem}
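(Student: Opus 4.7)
The plan is to reduce both parts of the theorem to classical Poisson convergence, using as key input the pointwise limit embedded in the proof of Theorem~\ref{main}:
\begin{equation*}
N\bigl(1 - S(a_N x + b_N)\bigr) \;\longrightarrow\; e^x \quad \text{as } N \to \infty, \quad x \in \R.
\end{equation*}

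For the marginal statement \eqref{cd1}, I would define the count $M_N(x) := \#\{n \le N : \tau_n \le a_N x + b_N\}$, which is $\text{Binomial}\bigl(N,\, 1-S(a_N x + b_N)\bigr)$. Since $N(1-S(a_N x + b_N)) \to e^x$, the classical Poisson limit theorem yields $M_N(x) \tod \text{Poisson}(e^x)$. Because $\{T_{k,N} > a_N x + b_N\}$ coincides with $\{M_N(x) \le k-1\}$,
\begin{equation*}
\P\!\left(\frac{T_{k,N} - b_N}{a_N} > x\right) = \P\bigl(M_N(x) \le k-1\bigr) \;\longrightarrow\; e^{-e^x}\sum_{j=0}^{k-1}\frac{e^{jx}}{j!},
\end{equation*}
and differentiating the complementary CDF in $x$ produces $f_{X_k}(x) = e^{kx-e^x}/(k-1)!$ as claimed.

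For the joint statement \eqref{cd2}, I would strengthen this to a point-process convergence: the empirical measure $\xi_N := \sum_{n=1}^{N}\delta_{(\tau_n-b_N)/a_N}$ converges vaguely in distribution to a Poisson point process $\xi$ on $\R$ with intensity $e^x\,dx$. By the standard criterion for Poisson limits, it suffices to check, for any half-open interval $(c,d]$, that $\E[\xi_N((c,d])] \to \int_c^d e^x\,dx$ and $\P(\xi_N((c,d]) = 0) \to \exp\!\left(-\int_c^d e^x\,dx\right)$; both statements reduce immediately to the scalar limit above applied at $x=c$ and $x=d$, using $\xi_N((c,d]) = M_N(d) - M_N(c)$ and the same Binomial-to-Poisson argument. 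Since $\int_{-\infty}^M e^x\,dx$ is finite for every $M$, the limit process $\xi$ has a.s.\ finitely many atoms in each $(-\infty,M]$, so its $k$ smallest atoms are well-defined and the map extracting them is a.s.\ continuous in the vague topology. The continuous mapping theorem then gives \eqref{cd2}, and the stated joint density is the standard expression for the $k$ smallest atoms of a Poisson process with intensity $e^x$: the probability of exactly $k$ atoms in $(-\infty,x_k]$ located near $x_1 \le \cdots \le x_k$ equals $\exp(-e^{x_k})\prod_{r=1}^{k} e^{x_r}\,dx_r$.

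I expect no substantial technical obstacle beyond Theorem~\ref{main} itself: once the scalar limit $N(1-S(a_N x + b_N)) \to e^x$ is available, the remainder is textbook extreme-value and point-process bookkeeping. The only minor care needed is in the continuous-mapping step, where one must verify that the limiting process $\xi$ has no ties among its atoms almost surely; this is immediate because the intensity $e^x\,dx$ has no atoms, so any two distinct points of $\xi$ differ almost surely, and hence the ordered-extraction map is continuous on a set of full measure.
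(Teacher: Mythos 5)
Your approach is correct in outline but differs in spirit from the paper: the paper disposes of this theorem in one line by invoking Theorem 3.5 of Coles's textbook (the standard result that if the scaled minimum of iid random variables converges to a Gumbel limit, then the $k$th order statistic and the joint vector of the $k$ smallest converge to the distributions you wrote down). You have instead reconstructed the proof of that textbook result from scratch via the Poisson point-process route (as in Resnick or Leadbetter--Lindgren--Rootz\'en). Both are legitimate; your version is self-contained, at the cost of having to do the point-process bookkeeping carefully. The marginal argument via the count $M_N(x)$ and the binomial-to-Poisson limit is clean and correct, including the differentiation producing $f_{X_k}(x)=e^{kx-e^x}/(k-1)!$.

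There is, however, a gap in your continuous-mapping step, and it is not the one you flag. You worry about ties among the atoms of $\xi$, which is indeed handled because the intensity $e^x\,\mathrm{d}x$ is nonatomic. The real issue is that the ``extract the $k$ smallest atoms'' map is \emph{not} continuous in the vague topology on $\R$: vague convergence $\xi_N\to\xi$ uses compactly supported test functions and therefore does not preclude $\xi_N$ from carrying atoms that escape to $-\infty$. A sequence of simple point measures can converge vaguely to $\xi$ while its smallest atom runs off to $-\infty$, in which case the extracted $k$-tuple does not converge. Your sentence ``the limit process $\xi$ has a.s.\ finitely many atoms in each $(-\infty,M]$, so \dots\ the map extracting them is a.s.\ continuous'' addresses a property of the limit, not of the approximating sequence, and does not close this gap. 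To repair it you need a uniform tightness estimate near $-\infty$, which is in fact available here: $\E[\xi_N((-\infty,-M])]=N\bigl(1-S(-Ma_N+b_N)\bigr)\to e^{-M}$, so $\limsup_N\P\bigl(\xi_N((-\infty,-M])\ge1\bigr)\le e^{-M}\to0$ as $M\to\infty$. Equivalently, one can work on the compactified state space $[-\infty,\infty)$ (the mirror of Resnick's $(-\infty,\infty]$ for maxima), on which the extraction map is genuinely continuous on the relevant set, or one can bypass point processes entirely and pass directly to the joint survival function via the identity $\{T_{1,N}>y_1,\dots,T_{k,N}>y_k\}=\bigcap_{j=1}^k\{M_N(y_j)\le j-1\}$ for $y_1\le\cdots\le y_k$, using the joint Poisson limit of the increments of $M_N$. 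Any of these fixes makes the argument rigorous.
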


The following theorem ensures the convergence of the moments of the $k$th fastest FPT.

\begin{theorem}\label{kth moment}
Under the assumptions of Theorem~\ref{moments} with $\{a_{N}\}_{N\ge1}$ and $\{b_{N}\}_{N\ge1}$ given by either \eqref{abab} or \eqref{ababuf}, we have that for each moment $m\in(0,\infty)$,
\begin{align}\label{kmoment}
\E\left[\left(\frac{T_{k,N}-b_{N}}{a_{N}}\right)^{m}\right]
\to\E[X_{k}^{m}]\quad\text{as }N\to\infty,
\end{align}
where $X_{k}$ has the probability density function in \eqref{xkpdf}. Therefore,
\begin{align*}
\E[(T_{k,N}-b_{N})^{m}]
=a_{N}^{m}\E[X_{k}^{m}]+o(a_{N}^{m}).%,%\quad\text{where }\lim_{N\to\infty}\frac{o(a_{N}^{m})}{a_{N}^{m}}=0.
\end{align*}
Further, if $m>0$ is an integer, then $\E[X_{k}^{m}]$ can be explicitly calculated. In particular,
\begin{align*}
\E[T_{k,N}]
%&=b_{N}-(\gamma-H_{k-1})a_{N}+o(a_{N})
&=b_{N}+\psi(k)a_{N}+o(a_{N})
=\E[T_{1,N}]+H_{k-1}a_{N}+o(a_{N}),\\
\textup{Variance}(T_{k,N})
&=\psi'(k)a_{N}^{2}+o(a_{N}^{2}),
\end{align*}
where $\psi(x)$ is the digamma function and $H_{k-1}=\sum_{r=1}^{k-1}\tfrac{1}{r}$ is the $(k-1)$-th harmonic number.
\end{theorem}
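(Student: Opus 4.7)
The plan is to combine the weak convergence $(T_{k,N}-b_N)/a_N \to_{\textup{d}} X_k$ from Theorem~\ref{kth} with a uniform integrability argument in the spirit of Pickands~(1968), which was already used to prove Theorem~\ref{moments}. Once $L^{m+\delta}$-boundedness of $Z_{k,N} := (T_{k,N}-b_N)/a_N$ is established for every $m > 0$ and some $\delta = \delta(m) > 0$, the convergence in distribution upgrades to the moment convergence \eqref{kmoment}, and the displayed identity $\E[(T_{k,N}-b_N)^m] = a_N^m\E[X_k^m] + o(a_N^m)$ is a direct restatement.

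For the lower tail I would use the pointwise domination $T_{k,N} \geq T_{1,N}$, which gives $(Z_{k,N})^- \leq (Z_{1,N})^-$ and hence $\E[((Z_{k,N})^-)^{m+\delta}] \leq \E[|Z_{1,N}|^{m+\delta}]$; the right-hand side is uniformly bounded in $N$ by Theorem~\ref{moments}. For the upper tail I would work from the exact formula $\P(T_{k,N} > t) = \P(Y_t \leq k-1)$, where $Y_t \sim \textup{Binomial}(N, 1-S(t))$. Combining the standard estimates $\binom{N}{j} \leq N^j/j!$ and $(1-p)^N \leq e^{-Np}$, the binomial tail is bounded by a fixed multiple of $\mu_N(t)^{k-1} e^{-\mu_N(t)}$, where $\mu_N(t) := N(1-S(t))$. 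The sequences $a_N, b_N$ from \eqref{abab} or \eqref{ababuf} are precisely the Gumbel normalizing constants, so $\mu_N(b_N) \to 1$ and, by the short-time asymptotics of $S$, $\mu_N(a_N x + b_N) \to e^x$. A monotonicity argument (since $\mu_N$ is nondecreasing in $t$) yields the one-sided bound $\mu_N(a_N x + b_N) \geq \tfrac{1}{2} e^x$ for $x \geq 0$ and $N$ large, so
\[
\P(Z_{k,N} > x) \leq \frac{C_k e^{(k-1)x}}{(k-1)!}\exp\!\bigl(-\tfrac{1}{2}e^x\bigr),
\]
which is integrable against $x^{m+\delta-1}\,dx$ on $(0,\infty)$ for every $m, \delta > 0$. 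This gives the required upper-tail moment bound uniformly in $N$.

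With $L^m$-convergence in hand, the integer-order moments of $X_k$ are computed by the substitution $u = e^x$ in \eqref{xkpdf}, which converts the density into $u^{k-1}e^{-u}/\Gamma(k)$ and yields
\[
\E[X_k^m] = \frac{1}{\Gamma(k)}\int_0^\infty (\ln u)^m u^{k-1}e^{-u}\,du = \frac{\Gamma^{(m)}(k)}{\Gamma(k)}.
\]
In particular $\E[X_k] = \Gamma'(k)/\Gamma(k) = \psi(k)$ and $\E[X_k^2] = \Gamma''(k)/\Gamma(k) = \psi'(k) + \psi(k)^2$, so $\textup{Variance}(X_k) = \psi'(k)$. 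The alternative form $\E[T_{k,N}] = \E[T_{1,N}] + H_{k-1}a_N + o(a_N)$ then follows from the classical identity $\psi(k) = -\gamma + H_{k-1}$ combined with the $k=1$ formula $\E[T_{1,N}] = b_N - \gamma a_N + o(a_N)$ supplied by Theorem~\ref{moments}.

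The main obstacle I anticipate is the upgrade from the pointwise limit $\mu_N(a_Nx+b_N) \to e^x$ to a uniform-in-$N$ one-sided lower bound valid on all of $[0,\infty)$. This requires turning the asymptotic equivalence $1 - S(t) \sim At^p e^{-C/t}$ into a one-sided inequality on a neighborhood of $0$ that shrinks appropriately with $N$, and then using the explicit form of $a_N, b_N$ to translate this into the needed bound on $\mu_N(a_Nx+b_N)$. Once this technical bookkeeping is done, the remaining steps are routine applications of uniform integrability and direct integration against the limiting density~\eqref{xkpdf}.
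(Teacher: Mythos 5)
Your overall strategy---upgrading the distributional convergence of Theorem~\ref{kth} to moment convergence via uniform integrability---matches the paper's, and your lower-tail reduction to $k=1$ via $T_{k,N}\geq T_{1,N}$ is exactly the treatment of $I_2$ in the paper's proof. Your computation of $\E[X_k^m]$ via the substitution $u=e^x$, the digamma identity, and the passage from $\psi(k)$ to $-\gamma+H_{k-1}$ are all correct. The genuine gap is the one you flag: the claimed inequality $\mu_N(a_Nx+b_N)\geq\tfrac12 e^x$ for all $x\geq 0$ and $N$ large is actually \emph{false}, not merely unproven, because $\mu_N(t)=N(1-S(t))\leq N$ for every $t$ while $e^x$ is unbounded; the inequality already breaks down for $x\gtrsim\ln N$, a range which contributes to the moment integral. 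Monotonicity of $\mu_N$ in $t$ gives you comparison across different $t$ for the same $N$, but the obstruction here is in the opposite direction: you need a bound that survives as $x\to\infty$ for each fixed $N$, and $\mu_N$ saturates at $N$ while the lower bound you want keeps growing. Making your route work would require splitting the upper tail into at least three ranges (a window where the Taylor expansion of $C/(a_Nx+b_N)$ is controlled and $\mu_N\approx e^x$; an intermediate window where $a_Nx+b_N\leq\delta$ but the expansion degrades, handled by the constant-in-$x$ bound $\mu_N\geq\mu_N(a_Nx_1+b_N)$ paired with the polynomial growth of $\delta/a_N$; and the far tail $a_Nx+b_N>\delta$, which needs the hypothesis $\E[T_{N_0}]<\infty$ and geometric decay of $S(\delta)^N$ to tame). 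That is substantially more bookkeeping than the sentence you devote to it.

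The paper sidesteps the need for a uniform lower bound on $\mu_N$ by a different device: it couples $\{\tau_n\}$ with two auxiliary sequences $\{\tau_n^\pm\}$ (via a common uniform source) whose survival functions are \emph{exactly} $1-A_{0,1}t^pe^{-C/t}$ near $t=0$ rather than merely asymptotically so, giving $T_{k,N}^-\leq T_{k,N}\leq T_{k,N}^+$ pointwise. This reduces everything to explicit integrals against $S_\pm$, which are then decomposed as $\P(T_{k,N}^\pm>x)=\P(T_{1,N}^\pm>x)+\sum_{j=1}^{k-1}\P(T_{j,N}^\pm<x<T_{j+1,N}^\pm)$ and estimated by Laplace's method with a movable maximum, together with the Peng (2012) result to absorb the mismatch between the normalizing constants for $S$ and for $S_\pm$. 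Your direct estimate on the Binomial$(N,1-S(t))$ tail is conceptually cleaner and avoids the coupling construction entirely, but at the price of needing uniform two-sided control of $1-S(t)$ over a $t$-range that expands with $N$; the paper's coupling pays the setup cost once and then gets exactness for free. Either route is viable, but as written yours needs the range-splitting carried out before it constitutes a proof.
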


%%%%%%%%%%%%%%%%%%%%%%%%%%%%%%%%%%%
\section{Numerical examples}\label{examples}

We now apply our results to three specific examples.

%%%%%%%%%%%%%%%%%%%%%%%%%%%%%%%%%%%
\subsection{One dimension}\label{1d}

First consider the case of $N\ge1$ independent searchers diffusing in one space dimension with diffusivity $D>0$. Suppose the searchers each start at ${L}>0$ and let $\tau_{n}$ be the first time the $n$th searcher reaches the origin. In this case,
\begin{align*}
S(t)=\P(\tau_{1}>t)=1-\text{erfc}\Big(\frac{{{L}}}{\sqrt{4D t}}\Big),\quad t>0,
\end{align*}
and thus
\begin{align*}
1-S(t)
\sim \sqrt{\frac{4Dt}{\pi {L}^{2}}}e^{-{L}^{2}/(4Dt)}\quad\text{as }t\to0+.
\end{align*}

Therefore, Theorems~\ref{main}-\ref{kth moment} hold with
\begin{align*}
A=\sqrt{\frac{4D}{\pi {L}^{2}}},\quad
p=\frac{1}{2},\quad
C=\frac{{L}^{2}}{4D}.
\end{align*}
In particular, 
\begin{align*}
\frac{T_{N}-b_{N}}{a_{N}}
\to_{\textup{d}}
X=_{\textup{d}}\textup{Gumbel}(0,1)\quad\text{as }N\to\infty,
\end{align*}
where $\{a_{N}\}_{N\ge1}$ and $\{b_{N}\}_{N\ge1}$ are given by either \eqref{abab} or \eqref{ababuf}. Hence, the distribution of $T_{N}$ is approximately $\textup{Gumbel}(b_{N},a_{N})$.

In the left panel of Figure~\ref{figmean}, we plot the error of various approximations of the mean fastest FPT, $\E[T_{N}]$, as functions of $N$. Specifically, we plot the relative error,
\begin{align}\label{re}
\Big|\frac{\E[T_{N}]-\T_{N}}{\E[T_{N}]}\Big|,
\end{align}
where $\T_{N}$ is an approximation of $\E[T_{N}]$. The value of $\E[T_{N}]$ used in \eqref{re} is calculated by numerical approximation of the following integral,
\begin{align*}
\E[T_{N}]
=\int_{0}^{\infty}(S(t))^{N}\,\dd t.
\end{align*}

The red dotted curve in the left panel of Figure~\ref{figmean} is the error \eqref{re} for the approximation $\T_{N}=L^{2}/(4D\ln N)$ (this approximation dates back to \cite{weiss1983}). The blue dashed curve is for the approximation $\T_{N}=b_{N}'-\gamma a_{N}'$ where $\{a_{N}'\}_{N\ge1}$ and $\{b_{N}'\}_{N\ge1}$ are given by \eqref{ababuf}. The black solid curve is for the approximation $\T_{N}=b_{N}-\gamma a_{N}$ where $\{a_{N}\}_{N\ge1}$ and $\{b_{N}\}_{N\ge1}$ are given by \eqref{abab}. This figure shows that our approximations, $b_{N}-\gamma a_{N}$ and $b_{N}'-\gamma a_{N}'$, to the mean fastest FPT are much more accurate than $L^{2}/(4D\ln N)$, and $b_{N}-\gamma a_{N}$ is more accurate than $b_{N}'-\gamma a_{N}'$.

In the left panel of Figure~\ref{figdist}, we illustrate the convergence in distribution of $(T_{N}-b_{N})/a_{N}$ to $X=_{\textup{d}}\textup{Gumbel}(0,1)$ where $\{a_{N}\}_{N\ge1}$ and $\{b_{N}\}_{N\ge1}$ are given by \eqref{abab}. Specifically, we plot the probability density function of $(T_{N}-b_{N})/a_{N}$ for $N\in\{10^{2},10^{4},10^{6}\}$, which approaches the density of $X$ (namely, $f_{X}(x)=\exp(x-e^{x})$) as $N$ increases. In both Figures~\ref{figmean} and \ref{figdist}, we take $L=D=1$.

%%%%%%%%%%%%%%%%%%%%%%%%%%%%%%%%%%%
\begin{figure*}%[t]%[htp]
\centering
\includegraphics[width=.99\linewidth]{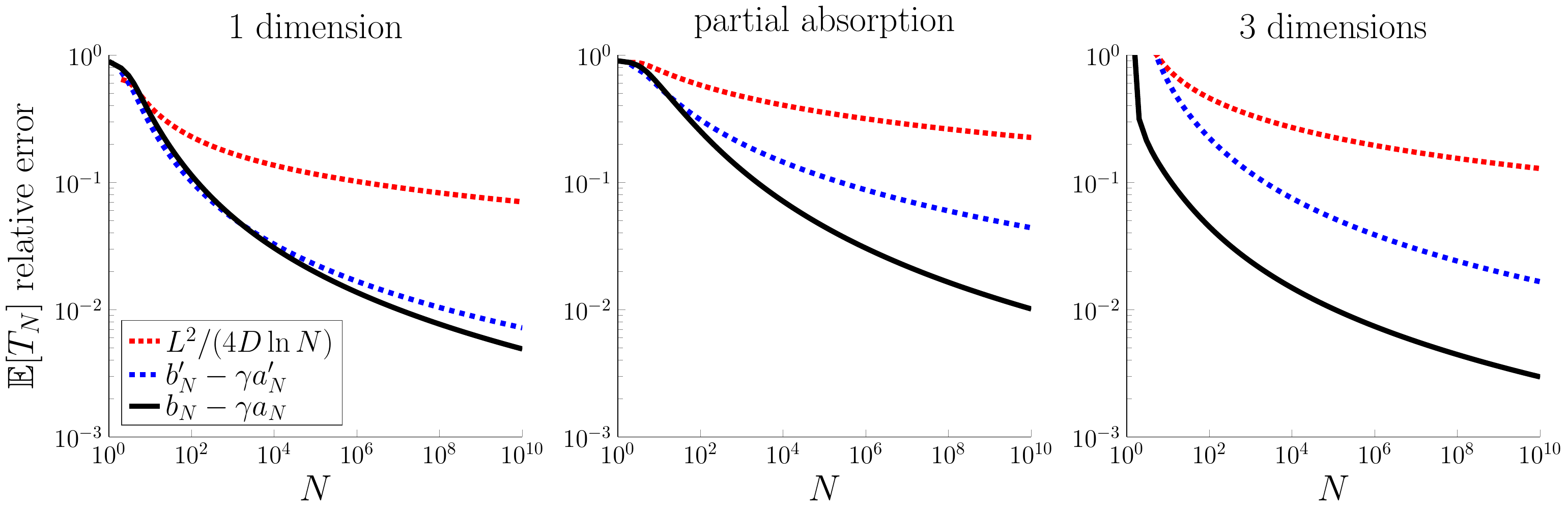}
\caption{Accuracy of approximations to the mean fastest FPT. The left, middle, and right panels correspond respectively to the examples in sections~\ref{1d}, \ref{partial}, and \ref{3d}. In each panel, the red dotted curve is the relative error \eqref{re} for the approximation $L^{2}/(4D\ln N)$, the blue dashed curve is for $b_{N}'-\gamma a_{N}'$ where $a_{N}',b_{N}'$ are given by \eqref{ababuf}, and the black solid curve is for $b_{N}-\gamma a_{N}$ where $a_{N},b_{N}$ are given by \eqref{abab}.}
\label{figmean}
\end{figure*}
%%%%%%%%%%%%%%%%%%%%%%%%%%%%%%%%%%%

%%%%%%%%%%%%%%%%%%%%%%%%%%%%%%%%%%%
\begin{figure*}%[t]%[htp]
\centering
\includegraphics[width=.99\linewidth]{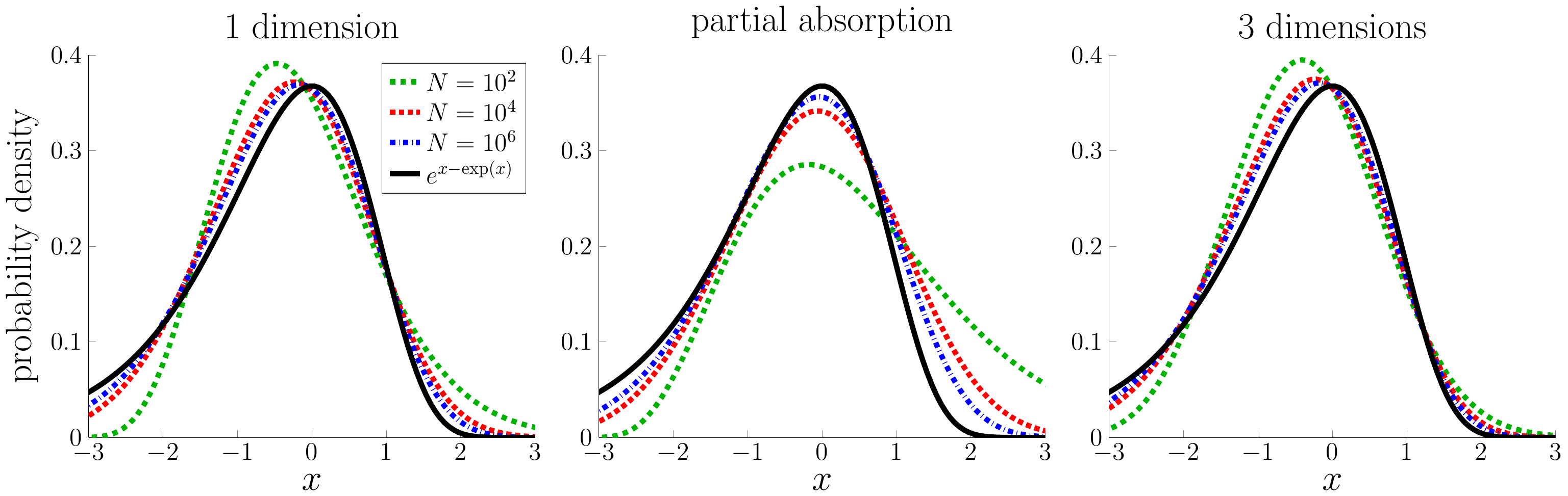}
\caption{Convergence in distribution of rescaled fastest FPT. The left, middle, and right panels correspond respectively to the examples in sections~\ref{1d}, \ref{partial}, and \ref{3d}. In each panel, the green dashed, red dotted, and blue dashed-dotted curves give the probability density function of $(T_{N}-b_{N})/a_{N}$ for $N\in\{10^{2},10^{4},10^{6}\}$ where $\{a_{N}\}_{N\ge1}$ and $\{b_{N}\}_{N\ge1}$ are given by \eqref{abab}, and the black solid curve shows the limiting probability density function, $\exp(x-e^{x})$.}
\label{figdist}
\end{figure*}
%%%%%%%%%%%%%%%%%%%%%%%%%%%%%%%%%%%

%%%%%%%%%%%%%%%%%%%%%%%%%%%%%%%%%%%
\subsection{Partial absorption}\label{partial}

Consider the example in the previous subsection, but now suppose the target at the origin is partially absorbing. This means that when a searcher hits the target, it is either absorbed or reflected, and the probabilities of these two events are described by a parameter $\kappa>0$ called the reactivity or absorption rate \citep{grebenkov2006}. Mathematically, this means the Fokker-Planck equation describing the probability density for a searcher's position has a Robin boundary condition at the origin involving the parameter $\kappa>0$ \citep{lawley15jk}.

In this case, the survival probability for a single searcher is  {\black \citep{carslaw1959}}
\begin{align*}
S(t)
=\P(\tau_{1}>t)
=1-\text{erfc}\Big(\frac{{{L}}}{\sqrt{4D t}}\Big)+e^{\frac{\kappa  (\kappa  t+{{L}})}{D}} \text{erfc}\Big(\frac{2 \kappa  t+{{L}}}{\sqrt{4D t}}\Big),\quad t>0,
\end{align*}
and thus
\begin{align*}
1-S(t)
\sim \frac{4}{\sqrt{\pi}}\frac{\kappa {L}}{D}\Big(\frac{Dt}{{L}^{2}}\Big)^{3/2}e^{-{L}^{2}/(4Dt)}\quad\text{as }t\to0+.
\end{align*}
Therefore, Theorems~\ref{main}-\ref{kth moment} hold with
\begin{align}\label{acppartial}
A=\frac{4}{\sqrt{\pi}}\frac{\kappa {L}}{D}\Big(\frac{D}{{L}^{2}}\Big)^{3/2},\quad
p=\frac{3}{2},\quad
C=\frac{{L}^{2}}{4D}.
\end{align}

The middle panel of Figure~\ref{figmean} shows the relative error \eqref{re} of approximations to $\E[T_{N}]$ in this case of a partially absorbing target. The red dotted curve is again the error for the approximation $\T_{N}=L^{2}/(4D\ln N)$ (this approximation was recently found and proven to have the correct large $N$ asymptotics \citep{lawley2020uni}). The blue dashed and black solid curves again correspond respectively to $\T_{N}=b_{N}'-\gamma a_{N}'$ and $\T_{N}=b_{N}-\gamma a_{N}$ where $a_{N}',b_{N}'$ are in \eqref{ababuf} and $a_{N},b_{N}$ are in \eqref{abab}. Again, $b_{N}-\gamma a_{N}$ and $b_{N}'-\gamma a_{N}'$ are much more accurate than $L^{2}/(4D\ln N)$, and $b_{N}-\gamma a_{N}$ is more accurate than $b_{N}'-\gamma a_{N}'$.

The middle panel of Figure~\ref{figdist} illustrates the convergence in distribution of $(T_{N}-b_{N})/a_{N}$ to $X=_{\textup{d}}\textup{Gumbel}(0,1)$ in this case of a partially absorbing target (again, for $N\in\{10^{2},10^{4},10^{6}\}$ and where $\{a_{N}\}_{N\ge1}$ and $\{b_{N}\}_{N\ge1}$ are given by \eqref{abab}). In both Figures~\ref{figmean} and \ref{figdist}, we take $L=D=\kappa=1$.

%%%%%%%%%%%%%%%%%%%%%%%%%%%%%%%%%%%
\subsection{Three dimensions}\label{3d}

Finally, consider the case where the $N\ge1$ independent searchers diffuse in three dimensional space, and let $\tau_{n}$ be the first time the $n$th searcher leaves a sphere of radius $L>0$ centered at its starting location. In this case, the survival probability for a single searcher is  {\black \citep{carslaw1959}}
\begin{align*}
S(t)
=\P(\tau_{1}>t)
=1-2\sqrt{\frac{L^{2}}{\pi Dt}}\sum_{j=0}^{\infty}e^{-(j+1/2)^{2}L^{2}/(Dt)},\quad t>0,
\end{align*}
and thus
\begin{align*}
1-S(t)
\sim 2\sqrt{\frac{L^{2}}{\pi Dt}}e^{-L^{2}/(4Dt)}\quad\text{as }t\to0+.
\end{align*}
Therefore, Theorems~\ref{main}-\ref{kth moment} hold with
\begin{align*}
A=2\sqrt{\frac{L^{2}}{\pi D}},\quad
p=-\frac{1}{2},\quad
C=\frac{{L}^{2}}{4D}.
\end{align*}

The right panel of Figure~\ref{figmean} shows the relative error \eqref{re} of approximations to $\E[T_{N}]$ in this three dimensional example. The red dotted curve is again the error for the approximation $\T_{N}=L^{2}/(4D\ln N)$ (this approximation was found by \cite{yuste2001}). The blue dashed and black solid curves again correspond respectively to $\T_{N}=b_{N}'-\gamma a_{N}'$ and $\T_{N}=b_{N}-\gamma a_{N}$ where $a_{N}',b_{N}'$ are in \eqref{ababuf} and $a_{N},b_{N}$ are in \eqref{abab}. Further, the right panel of Figure~\ref{figdist} illustrates the convergence in distribution of $(T_{N}-b_{N})/a_{N}$ to $X=_{\textup{d}}\textup{Gumbel}(0,1)$ in this three dimensional example (again, for $N\in\{10^{2},10^{4},10^{6}\}$ and where $\{a_{N}\}_{N\ge1}$ and $\{b_{N}\}_{N\ge1}$ are given by \eqref{abab}). In both Figures~\ref{figmean} and \ref{figdist}, we take $L=D=1$.

%%%%%%%%%%%%%%%%%%%%%%%%%%%%%%%%%%%
\section{Discussion}

In this work, we found tractable approximations for the full probability distribution of extreme FPTs of diffusion. These approximate distributions depend on only three parameters describing the short time behavior of the survival probability of a single searcher, and we proved that these approximations are exact in the many searcher limit. We used our approximate distributions to derive new formulas for statistics of extreme FPTs and prove rigorous error estimates.

Extreme FPTs of diffusion were first studied by \citet*{weiss1983}, where they found approximations of $\E[T_{k,N}]$ for large $N$ in various one dimensional problems. Statistics of extreme FPTs of diffusion in one dimensional or spherically symmetric domains were further studied by \cite{yuste1996}, \cite{yuste2000}, \cite{yuste2001}, \cite{van2003}, \cite{redner2014}, and \cite{meerson2015}. Recently, approximate formulas for the moments of extreme FPTs of diffusion in more general two and three dimensional domains were derived by \cite{ro2017}, \cite{basnayake2019}, and \cite{lawley2020esp}. Even more recently, it was proven in significant generality that the $m$th moment of the $k$th fastest FPT has the large $N$ behavior,
\begin{align}\label{universal}
\E[(T_{k,N})^{m}]
\sim\Big(\frac{L^{2}}{4D\ln N}\Big)^{m}\quad\text{as }N\to\infty,
\end{align}
where $D$ is a characteristic diffusivity and $L$ is a certain geodesic distance \citep{lawley2020uni}.

The moment formulas derived in the present work agree with \eqref{universal} to leading order, but are much more accurate for finite $N$. In addition, the moment formulas in the present work explain and confirm a remarkable conjecture by \cite{yuste2000}. In that work, the authors conjectured that the mean fastest FPT to escape a ball of radius $L$ in dimension $d\ge2$ has the following approximation,
\begin{align}\label{yconjecture}
\E[T_{1,N}]
\approx\frac{L^{2}}{4D\ln N}\Big[1+\sum_{n=1}^{\infty}(\ln N)^{-n}\sum_{m=0}^{n}K_{m}^{(n)}(\ln\ln N)^{m}\Big],
\end{align}
for some unknown constants $\{\{K_{m}^{(n)}\}_{m=0}^{n}\}_{n\ge1}$ (some of which were estimated numerically). To derive \eqref{yconjecture} from our results, first note that the principal branch of the LambertW function has the following expansion for $z\gg1$ \citep{corless1996},
\begin{align}\label{wexpand}
\begin{split}
W_{0}(z)
&= L_{1} - L_{2} + \sum_{i=0}^\infty \sum_{j=1}^\infty c_{ij} L_1^{-i-j} L_{2}^j,
\end{split}
\end{align}
where $L_{1}=\ln z$, $L_{2}=\ln \ln z$, 
\begin{align*}
c_{ij}
=\frac{(-1)^i}{j!} \left[ \begin{matrix} i + j \\ i + 1 \end{matrix} \right],
\end{align*}
 and $\left[ \begin{smallmatrix} i + j \\ i + 1 \end{smallmatrix} \right]$ are non-negative Stirling numbers of the first kind. Similarly, the lower branch, $W_{-1}(z)$, has the expansion in \eqref{wexpand} for $-1\ll z<0$ if $L_{1}=\ln(-z)$ and $L_{2}=\ln(-\ln(-z))$ \citep{corless1996}. Therefore, upon using the definitions in \eqref{abab}-\eqref{dubdub} and the expansion in \eqref{wexpand}, it follows that our formula $\E[T_{1,N}]\approx b_{N}-\gamma a_{N}$ is exactly of the form in the conjecture \eqref{yconjecture}.

Finally, we emphasize that our results apply to any FPT problem where the survival probability $S(t)=\P(\tau_{1}>t)$ of a single searcher satisfies
\begin{align}\label{st2}
1-S(t)\sim At^{p}e^{-C/t}\quad\text{as }t\to0+,
\end{align}
for some constants $C>0$, $A>0$, and $p\in\R$. The behavior in \eqref{st2} is very generic for diffusion processes and holds in many diverse scenarios. For example, \cite{weiss1983} found \eqref{st2} for one-dimensional drift-diffusion processes with a broad class of potential (drift) fields. Similarly, \cite{yuste2001} found \eqref{st2} for the first time a pure diffusion in dimension $d\ge1$ moves any distance $L>0$ from its initial location (and referred to \eqref{st2} as a ``universal'' form). Further, \cite{ro2017} formally derived \eqref{st2} for a pure diffusion searching for an arbitrarily placed small target in a spherical domain in dimension $d=3$. It is also known that \eqref{st2} holds for pure diffusion in dimension $d=1$ with a partially absorbing target (see section~\ref{partial} above). Further, it was proven that under very general conditions (including (i) diffusions in $\R^{d}$ with space-dependent diffusivities and drift fields and (ii) diffusions on $d$-dimensional smooth Riemannian manifolds that may contain reflecting obstacles), the survival probability satisfies \citep{lawley2020uni}
\begin{align}\label{vgg}
\lim_{t\to0+}t\ln(1-S(t))
=-\frac{L^{2}}{4D}<0,
\end{align}
where $D>0$ is a characteristic diffusivity and $L>0$ is a certain geodesic distance that depends on any space-dependence or anisotropy in the diffusivity (if the diffusivity is constant in space, then $L$ is merely the shortest distance from the starting location to the target). Therefore, if \eqref{st2} holds in a particular problem, then \eqref{vgg} implies that $C=L^{2}/(4D)$, and thus the only parameters to be found are $A$ and $p$.

{\black Finally, we discuss how our results can be used} to investigate the question posed in the Introduction section regarding the competing limits of small targets and many searchers, which is a ubiquitous feature of extreme FPTs in biological applications \citep{schuss2019}. {\black Consider a collection of small targets on an otherwise reflecting surface. Such a ``patchy surface'' arises in many applications \citep{brown1900,wolf2016,keil1999}, including the classic work of \cite{berg1977} on cell sensing by membrane receptors. In this scenario, the heterogeneous surface is commonly replaced by a homogeneous partially absorbing surface with a reactivity parameter $\kappa>0$ which incorporates the size, number, and arrangement of targets (many methods have been developed for determining $\kappa$ \citep{Berezhkovskii2004,Muratov2008,cheviakov2012,Dagdug2016,bernoff2018b,lawley2019fpk}). We thus cast this scenario into the setup of Section~\ref{partial} above. If the targets are small, then the trapping rate is small, $\kappa L/D\ll1$ \citep{bernoff2018b}. Hence, the mean fastest FPT will be large compared to the diffusion timescale unless the number of searchers is very large.} How large does the number of searchers $N$ need to be in order for the mean fastest FPT to be small compared to the diffusion timescale? That is, (i) how large does $N$ need to be so that $\frac{D}{L^{2}}\E[T_{N}]\ll1$ and (ii) what is an approximation of $\E[T_{N}]$ in this regime?

To answer questions (i) and (ii), {\black  notice that Theorems~\ref{uf} and \ref{moments} imply that 
\begin{align*}
\E[T_{N}]
&=b_{N}-\gamma a_{N}+o(a_{N})\\
&=\frac{C}{\ln N}\Big[1+\frac{p\ln(\ln(N))}{\ln N}-\frac{\ln(AC^{p})+\gamma}{\ln N}+o(1/\ln N)\Big]\quad\text{as }N\to\infty,
\end{align*}
where are $A$, $p$, and $C$ are given in \eqref{acppartial}.} Hence, this approximation is accurate if $N$ is sufficiently large so that
\begin{align}\label{ineq}
|\ln(AC^{p})|
\ll\ln N.
\end{align}
Using the values in {\black \eqref{acppartial}}, the relation \eqref{ineq} becomes {\black $\ln\tfrac{D}{\kappa L}\ll\ln N$}.
Therefore, in this scenario we have that 
\begin{align*}
\E[T_{N}]
\approx b_{N}-\gamma a_{N}
\ll \tfrac{L^{2}}{D}\quad\text{as long as }\textcolor{black}{\ln\tfrac{D}{\kappa L}\ll\ln N},
\end{align*}
which answers questions (i) and (ii) above.

%%%%%%%%%%%%%%%%%%%%%%%%%%%%%%%%%%%
\section{Appendix}

In this appendix, we collect the proofs of the propositions and theorems of section~\ref{math}.

%%%%%%%%%%%%%%%%%%%%%%%%%%%%%%%%%%%%%%%%
\begin{proof}[Proof of Proposition~\ref{basic}]
This proposition merely collects basic results on Gumbel random variables, all of which follow directly from \eqref{xgumbel}.
\end{proof}

%%%%%%%%%%%%%%%%%%%%%%%%%%%%%%%%%%%%%%%%
\begin{proof}[Proof of Proposition~\ref{easy}]
Since most results in extreme value theory are formulated in terms of the maximum of a set of random variables, define
\begin{align*}
M_{N}:=\max\{-\tau_{1},\dots,-\tau_{N}\}=-T_{N},
\end{align*}
and $F(x)=\P(-\tau_{1}<x)=S(-x)$. If there exists normalizing constants $\{a_{N}\}_{N\ge1}$ and $\{b_{N}\}_{N\ge1}$ so that $(M_{N}-b_{N})/a_{N}$ converges in distribution as $N\to\infty$ to a nontrivial random variable, then the distribution of that random variable can only be Frechet, Weibull, or Gumbel \citep{fisher1928}. Since 
\begin{align*}
x^{*}
:=\sup\{x:F(x)<1\}=0<\infty,
\end{align*}
Theorem 1.2.1 in the book by \cite{haanbook} ensures that the limiting distribution cannot be Frechet.

Furthermore, if the limiting distribution is Weibull, then Theorem 1.2.1 in \citep{haanbook} guarantees that there exists some $\gamma<0$ so that
\begin{align}\label{must}
\lim_{t\to0+}\frac{1-F(-tx)}{1-F(-t)}
=\lim_{t\to0+}\frac{1-S(tx)}{1-S(t)}
=x^{-1/\gamma}\quad\text{for all }x>0.
\end{align}
Now, it follows directly from \eqref{C} that $S(t)=1-e^{-C/t+h(t)}$ for some function $h(t)$ satisfying
\begin{align}\label{h}
\lim_{t\to0+}th(t)=0.
\end{align}

Therefore, we claim that \eqref{must} is violated with, for example, $x=2$. To see this, note that
\begin{align*}
\lim_{t\to0+}\frac{1-S(2t)}{1-S(t)}
=\lim_{t\to0+}e^{C/t+h(2t)-h(t)}.
\end{align*}
By \eqref{h}, we are assured that
\begin{align*}
-\frac{C}{2t}
\le h(t)
\le\frac{C}{2t}\quad\text{for sufficiently small }t.
\end{align*}
Hence,
\begin{align*}
\lim_{t\to0+}e^{C/t+h(2t)-h(t)}
\ge\lim_{t\to0+}e^{3C/(4t)}=+\infty,
\end{align*}
which indeed violates \eqref{must}. Therefore, if the limiting distribution is nondegenerate, it must be Gumbel.
\end{proof}

%%%%%%%%%%%%%%%%%%%%%%%%%%%%%%%%%%%%%%%%
\begin{proof}[Proof of Proposition~\ref{hh}]
Using the assumptions on $h(t)$ in \eqref{hc}, a direct calculation shows that
\begin{align*}
\lim_{t\to0+}\frac{\dd}{\dd t}\left(\frac{1-S_{0}(t)}{S_{0}'(t)}\right)
=0.
\end{align*}
Therefore, Theorem 2.1.2 in \citep{falkbook} ensures that
\begin{align}\label{el}
\lim_{N\to\infty}(S_{0}(a_{N}x+b_{N}))^{N}
=\exp(-e^{x}),\quad\text{for all }x\in\R,
\end{align}
for some choice of normalizing constants $\{(a_{N},b_{N})\}_{N\ge1}$. Remark 1.1.9 in the book by \cite{haanbook} implies we can take $a_{N}$ and $b_{N}$ as in \eqref{ab}.

Now, \eqref{el} is equivalent to
\begin{align*}
\lim_{N\to\infty}N
\ln(S_{0}(a_{N}x+b_{N}))
=-e^{x},\quad\text{for all }x\in\R.
\end{align*}
Hence, it must be the case that $S_{0}(a_{N}x+b_{N})\to1$ as $N\to\infty$, and thus a straightforward application of L'Hospital's rule gives
\begin{align*}
-\ln(S_{0}(a_{N}x+b_{N}))
\sim 1-S_{0}(a_{N}x+b_{N})\quad\text{as }N\to\infty.
\end{align*}
Therefore, \eqref{el} is equivalent to
\begin{align}\label{repl}
\lim_{N\to\infty}N(1-S_{0}(a_{N}x+b_{N}))
=e^{x},\quad\text{for all }x\in\R.
\end{align}
Now, $1-S_{0}(t)\sim1-S_{0}(t)$ as $t\to0+$ by assumption. Hence, \eqref{repl} holds with $S_{0}$ replaced by $S$, which then implies that \eqref{el} holds with $S_{0}$ replaced by $S$, which completes the proof.
\end{proof}

%%%%%%%%%%%%%%%%%%%%%%%%%%%%%%%%%%%%%%%%
\begin{proof}[Proof of Theorem~\ref{main}]
The theorem follows from Proposition~\ref{hh} upon calculating $\{a_{N}\}_{N\ge1}$ and $\{b_{N}\}_{N\ge1}$ in \eqref{abab} for $h(t)=\ln(At^{p})$ and using properties of the LambertW function \citep{corless1996}.
\end{proof}

%%%%%%%%%%%%%%%%%%%%%%%%%%%%%%%%%%%%%%%%
\begin{proof}[Proof of Theorem~\ref{uf}]
The theorem follows immediately from Theorem~\ref{main} and \eqref{cc0}-\eqref{cc1}.
\end{proof}

%%%%%%%%%%%%%%%%%%%%%%%%%%%%%%%%%%%%%%%%
\begin{proof}[Proof of Theorem~\ref{moments}]
By assumption, $\E[T_{N}]<\infty$ for some $N\ge1$. Hence, if $m\in(0,1)$, then $\E[(T_{N})^{m}]\le1+\E[T_{N}]<\infty$. If $m\ge1$, then it is straightforward to check that (see the proof of Proposition 2 in the work by \cite{lawley2020uni})
\begin{align*}
\E[(T_{2^{m-1}N})^{m}]<\infty.
\end{align*}
Since $\E[X^{m}]<\infty$, applying Theorem 2.1 in \citep{pickands1968} completes the proof.
\end{proof}

%%%%%%%%%%%%%%%%%%%%%%%%%%%%%%%%%%%%%%%%
\begin{proof}[Proof of Theorem~\ref{kth}]
The convergence in distribution in \eqref{cd1} and \eqref{cd2} follows immediately from Theorem~\ref{main} above and Theorem 3.5 in the book by \cite{colesbook}.
\end{proof}

%%%%%%%%%%%%%%%%%%%%%%%%%%%%%%%%%%%%%%%%
\begin{proof}[Proof of Theorem~\ref{kth moment}]
While convergence in distribution does not necessarily imply convergence of moments, it does imply convergence of moments if the sequence of random variables is uniformly integrable \citep{billingsley2013}. Hence, it is sufficient to prove that
\begin{align}\label{suffc}
\sup_{N}\E\Big[\Big(\frac{T_{k,N}-b_{N}}{a_{N}}\Big)^{2}\Big]
<\infty
\end{align}
since \eqref{suffc} ensures that $\{\frac{T_{k,N}-b_{N}}{a_{N}}\}_{N\ge1}$ is uniformly integrable \citep{billingsley2013}.

By assumption, $1-S(t)\sim At^{p}e^{-C/t}$ as $t\to0+$. Hence, there exists a $\delta>0$ so that
\begin{align*}
1-A_{1}t^{p}e^{-C/t}
\le 1-S(t)
\le 1-A_{0}t^{p}e^{-C/t},\quad\text{if }t\in(0,\delta],
\end{align*}
where $0<A_{0}<A<A_{1}$. Define the survival probability
\begin{align*}
S_{+}(t)
=\begin{cases}
1 & t\le0,\\
1-A_{0}t^{p}e^{-C/t} & t\in(0,\delta],\\
S(t) & t>\delta.
\end{cases}
\end{align*}
Define $S_{-}(t)$ similarly with $A_{0}$ replaced by $A_{1}$. Hence, $S_{-}(t)\le S(t)\le S_{+}(t)$ for all $t\in\R$. Let $\{U_{n}\}_{n\ge1}$ be an iid sequence of random variables, each with a uniform distribution on $[0,1]$. Define
\begin{align*}
\tau_{n}
&:=S^{-1}(U_{n}),\\
\tau_{n}^{-}
&:=S_{-}^{-1}(U_{n}),\\
\tau_{n}^{+}
&:=S_{+}^{-1}(U_{n}),
\end{align*}
and
\begin{align*}
T_{k,N}
&:=\min\big\{\{\tau_{1},\dots,\tau_{N}\}\backslash\cup_{j=1}^{k-1}\{T_{j,N}\}\big\},\quad k\in\{1,\dots,N\},\\
T_{k,N}^{\pm}
&:=\min\big\{\{\tau_{1}^{\pm},\dots,\tau_{N}^{\pm}\}\backslash\cup_{j=1}^{k-1}\{T_{j,N}^{\pm}\}\big\},\quad k\in\{1,\dots,N\},
\end{align*}
where $T_{1,N}:=\min\{\tau_{1},\dots,\tau_{N}\}$ and $T_{1,N}^{\pm}:=\min\{\tau_{1}^{\pm},\dots,\tau_{N}^{\pm}\}$. By construction, we have that
\begin{align*}
T_{k,N}^{-}
\le T_{k,N}
\le T_{k,N}^{+}\quad\text{almost surely}.
\end{align*}

Therefore, if $1_{\mathcal{A}}$ denotes the indicator function on an event $\mathcal{A}$, then
\begin{align*}
\E\Big[\Big(\frac{T_{k,N}-b_{N}}{a_{N}}\Big)^{2}\Big]
&=\E\Big[\Big(\frac{T_{k,N}-b_{N}}{a_{N}}\Big)^{2}1_{T_{k,N}>b_{N}}\Big]
+\E\Big[\Big(\frac{T_{k,N}-b_{N}}{a_{N}}\Big)^{2}1_{T_{k,N}<b_{N}}\Big]\\
&\le\E\Big[\Big(\frac{T_{k,N}^{+}-b_{N}}{a_{N}}\Big)^{2}1_{T_{k,N}^{+}>b_{N}}\Big]
+\E\Big[\Big(\frac{T_{k,N}^{-}-b_{N}}{a_{N}}\Big)^{2}1_{T_{k,N}^{-}<b_{N}}\Big]\\
&\le\E\Big[\Big(\frac{T_{k,N}^{+}-b_{N}}{a_{N}}\Big)^{2}\Big]
+\E\Big[\Big(\frac{T_{k,N}^{-}-b_{N}}{a_{N}}\Big)^{2}\Big].
\end{align*}
Hence, it remains to show that
\begin{align*}
\sup_{N}\E\Big[\Big(\frac{T_{k,N}^{\pm}-b_{N}}{a_{N}}\Big)^{2}\Big]
<\infty.
\end{align*}

Now,
\begin{align*}
\E\Big[\Big(\frac{T_{k,N}^{\pm}-b_{N}}{a_{N}}\Big)^{2}\Big]
&=\int_{0}^{\infty}\P\Big(\Big(\frac{T_{k,N}^{\pm}-b_{N}}{a_{N}}\Big)^{2}>t\Big)\,\dd t\\
&=\int_{0}^{\infty}\P(T_{k,N}^{\pm}-b_{N}>a_{N}\sqrt{t})\,\dd t
+\int_{0}^{\infty}\P(b_{N}-T_{k,N}^{\pm}>a_{N}\sqrt{t})\,\dd t\\
&=:I_{1}+I_{2}.
\end{align*}
Since $T_{1,N}^{\pm}\le T_{k,N}^{\pm}$ almost surely for any $k\in\{1,\dots,n\}$, we have that
\begin{align*}
I_{2}
\le\int_{0}^{\infty}\P(b_{N}-T_{1,N}^{\pm}>a_{N}\sqrt{t})\,\dd t
&\le\int_{0}^{\infty}\P\Big(\Big(\frac{T_{1,N}^{\pm}-b_{N}}{a_{N}}\Big)^{2}>t\Big)\,\dd t\\
&=\E\Big[\Big(\frac{T_{1,N}^{\pm}-b_{N}}{a_{N}}\Big)^{2}\Big].
\end{align*}
Now, Theorem~\ref{moments} implies that
\begin{align*}
\E\Big[\Big(\frac{T_{1,N}^{\pm}-b_{N}^{\pm}}{a_{N}^{\pm}}\Big)^{2}\Big]
\to\E[X^{2}]<\infty\quad\text{as }N\to\infty,
\end{align*}
where $\{a_{N}^{\pm}\}_{N\ge1}$ and $\{b_{N}^{\pm}\}_{N\ge1}$ are given by \eqref{abab} with $A$ replaced by $A_{0}$ or $A_{1}$. Now, it is straightforward to check that there exists $\alpha^{\pm}>0$ and $\beta^{\pm}\in\R$ so that
\begin{align*}
\frac{a_{N}^{\pm}}{a_{N}}\to\alpha^{\pm}
\quad\text{and}\quad
\frac{b_{N}-b_{N}^{\pm}}{a_{N}}\to\beta^{\pm}\quad\text{as }N\to\infty.
\end{align*}
Therefore, Proposition 1.1 and Remark 1 in the work by \cite{peng2012} imply that
$\E[(\frac{T_{1,N}^{\pm}-b_{N}^{\pm}}{a_{N}^{\pm}})^{2}]$ converges to some finite constant as $N\to\infty$. Hence,
\begin{align*}
\sup_{N}I_{2}<\infty.
\end{align*}

Moving to $I_{1}$, note first that
\begin{align*}
\P(T_{k,N}^{\pm}>x)
=\P(T_{1,N}^{\pm}>x)
+\sum_{j=1}^{k-1}\P(T_{j,N}^{\pm}<x<T_{j+1,N}^{\pm}).
\end{align*}
Hence,
\begin{align*}
I_{1}
%=\int_{0}^{\infty}\P(T_{k,N}^{\pm}-b_{N}>a_{N}\sqrt{t})\,\dd t
&=\int_{0}^{\infty}\P(T_{1,N}^{\pm}>a_{N}\sqrt{t}+b_{N})\,\dd t
+\sum_{j=1}^{k-1}\int_{0}^{\infty}\P(T_{j,N}^{\pm}<a_{N}\sqrt{t}+b_{N}<T_{j+1,N}^{\pm})\,\dd t\\
&=:I_{3}+I_{4}.
\end{align*}
Now, $I_{3}$ can be handled similarly to $I_{2}$ to obtain
\begin{align*}
\sup_{N}I_{3}<\infty.
\end{align*}

Hence, it remains to show that $\sup_{N}I_{4}<\infty$. Now, since $\{\tau_{n}^{\pm}\}_{n\ge1}$ are iid, it follows that
\begin{align*}
\P(T_{j,N}^{\pm}<x<T_{j+1,N}^{\pm})
={N \choose j}(1-S_{\pm}(x))^{j}(S_{\pm}(x))^{N-j},\;\text{if }j\in\{1,\dots,k-1\}.
\end{align*}
Hence,
\begin{align*}
I_{4}
=\sum_{j=1}^{k-1}{N \choose j}\int_{0}^{\infty}(1-S_{\pm}(a_{N}\sqrt{t}+b_{N}))^{j}(S_{\pm}(a_{N}\sqrt{t}+b_{N}))^{N-j}\,\dd t.
\end{align*}
An application of Laplace's method with a movable maximum (see, for example, section 6.4 in the book by \cite{bender2013}) shows that each term in this sum is bounded in $N$, and so the proof is complete.
\end{proof}

% Authors must disclose all relationships or interests that 
% could have direct or potential influence or impart bias on 
% the work: 
%
% \section*{Conflict of interest}
%
% The authors declare that they have no conflict of interest.

%%%%%%%%%%%%%%%%%%%%%%%%%%%%%%%%%%%%%%%%%%%%
% BibTeX users please use one of
%\bibliographystyle{spbasic}      % basic style, author-year citations
%\bibliographystyle{spmpsci}      % mathematics and physical sciences
%\bibliographystyle{spphys}       % APS-like style for physics
%\bibliography{}   % name your BibTeX data base
\bibliographystyle{spbasic}
\bibliography{library}
%%%%%%%%%%%%%%%%%%%%%%%%%%%%%%%%%%%%%%%%%%%%

\end{document}